\begin{document}

\title{Volumes of line bundles as limits on generically nonreduced schemes}

\author{Roberto N\'u\~nez}

\address{Roberto N\'u\~nez, Department of Mathematics,
University of Missouri, Columbia, MO 65211, USA}
\email{rnhvb@mail.missouri.edu}

\begin{abstract}
The volume of a line bundle is defined in terms of a limsup. It is a fundamental question whether this limsup is a limit. It has been shown that this is always the case on generically reduced schemes. We show that volumes are limits in two classes of schemes that are not necessarily generically reduced: codimension one subschemes of projective varieties such that their components of maximal dimension contain normal points and projective schemes whose nilradical squared equals zero. 
\end{abstract}

\keywords{volume of line bundle, projective scheme}
\subjclass[2010]{14C40,  14C17}

\maketitle

\section{Introduction}\label{Intro}

Let $X$ be a proper scheme of dimension $d$ over a field $k$. Let $\mathscr{L}$ be a line bundle on $X$. The \textit{volume} of $\mathscr{L}$ is defined as follows:

$$  \vol(\mathscr{L}) \coloneqq \limsup \limits_{n \to \infty} \dfrac{\dim_k\coh^0(X,\mathscr{L}^n)}{n^d/d!}.$$

\noindent This invariant is defined in \cite[Definition 2.2.31]{Laz01}, where several interesting properties are derived. A fundamental problem to study is whether the limsup in the definition of volume is a limit. Whenever this is the case, we will say that the volume of $\mathscr{L}$ exists as a limit. In this direction, we have the following theorem.

\begin{theorem}[{{\cite[Theorem 10.7]{Cut01}}}]\label{Theorem14a} Suppose that $X$ is a proper scheme of dimension $d$ over a  field $k$. Let $\mathcal{N}$ be the nilradical of $X$. Assume that $\dim(\Supp(\mathcal{N}))< d$. Let $\mathscr L$ be a line bundle on $X$.  Then, the limit

$$ \lim_{n\rightarrow \infty}\frac{\dim_k \coh^0(X,\mathscr L^n)}{n^d} $$

\noindent exists. That is,  $\vol(\mathscr L)$ exists as a limit.
\end{theorem}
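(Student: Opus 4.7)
The plan is to compare $X$ with its reduction $X_{\mathrm{red}}$ and thereby reduce the theorem to the case when $X$ is reduced, which is already known. Consider the standard short exact sequence of coherent $\mathcal{O}_X$-modules
\begin{equation*}
0 \longrightarrow \mathcal{N} \longrightarrow \mathcal{O}_X \longrightarrow \mathcal{O}_{X_{\mathrm{red}}} \longrightarrow 0.
\end{equation*}
Since $\mathscr{L}^n$ is locally free, tensoring preserves exactness, and the associated long exact sequence in cohomology gives
\begin{equation*}
0 \to \coh^0(X, \mathcal{N} \otimes \mathscr{L}^n) \to \coh^0(X, \mathscr{L}^n) \to \coh^0(X_{\mathrm{red}}, \mathscr{L}^n|_{X_{\mathrm{red}}}) \to \coh^1(X, \mathcal{N} \otimes \mathscr{L}^n).
\end{equation*}
From this I would extract the two-sided comparison
\begin{equation*}
\bigl| \dim_k \coh^0(X, \mathscr{L}^n) - \dim_k \coh^0(X_{\mathrm{red}}, \mathscr{L}^n|_{X_{\mathrm{red}}}) \bigr| \leq \dim_k \coh^0(X, \mathcal{N} \otimes \mathscr{L}^n) + \dim_k \coh^1(X, \mathcal{N} \otimes \mathscr{L}^n).
\end{equation*}

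The central quantitative input needed is that both terms on the right-hand side are $O(n^{d-1})$. To obtain this, I would filter $\mathcal{N}$ by its powers $\mathcal{N} \supset \mathcal{N}^2 \supset \cdots \supset \mathcal{N}^r = 0$; each successive quotient $\mathcal{N}^i/\mathcal{N}^{i+1}$ is an $\mathcal{O}_{X_{\mathrm{red}}}$-module whose set-theoretic support lies in $\Supp(\mathcal{N})$, hence is a coherent sheaf on a proper closed subscheme of dimension at most $d-1$. On any proper scheme of dimension $e$, the cohomology groups of a coherent sheaf twisted by the $n$-th power of \emph{any} line bundle have dimensions bounded by a polynomial of degree at most $e$ in $n$. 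Summing these polynomial bounds over the finitely many successive quotients of the filtration then yields the desired $O(n^{d-1})$ estimate for both $\coh^0$ and $\coh^1$ of $\mathcal{N} \otimes \mathscr{L}^n$.

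Once this estimate is in hand, the comparison above simplifies to
\begin{equation*}
\dim_k \coh^0(X, \mathscr{L}^n) = \dim_k \coh^0(X_{\mathrm{red}}, \mathscr{L}^n|_{X_{\mathrm{red}}}) + O(n^{d-1}),
\end{equation*}
so after dividing by $n^d/d!$ the limit on $X$ exists if and only if the analogous limit on $X_{\mathrm{red}}$ does, and I would invoke the known existence result for volumes on reduced proper schemes to finish. The main obstacle is the polynomial bound on $\coh^1$: unlike $\coh^0$, higher cohomology of twists by an arbitrary, possibly non-ample line bundle is not governed by any easy vanishing theorem, and rigorously establishing the growth principle for coherent cohomology on a proper scheme of dimension $e < d$ is the technical heart of the argument.
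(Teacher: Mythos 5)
Your dévissage is correct as far as it goes: the four-term cohomology sequence obtained from $0 \to \mathcal{N} \to \mathcal{O}_X \to \mathcal{O}_{X_{\mathrm{red}}} \to 0$ does yield the two-sided estimate you state, and controlling the error terms by the support dimension of $\mathcal{N}$ is exactly the mechanism the paper packages as \cref{lem: existence_limits_SES} and \cref{cor: existence_limits_isomorphism_away_from_closed_sets}. However, you have misidentified the technical heart. The bound $h^i(X,\mathcal{F}\otimes\mathscr{L}^n)=O(n^{\dim \Supp(\mathcal{F})})$ for an arbitrary line bundle $\mathscr{L}$ is a standard, citable fact (\cite[Proposition 1.31]{Deb}, invoked in the paper for precisely this purpose), so no new work is needed there. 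Note also that your filtration by powers of $\mathcal{N}$ does not by itself lower the dimension: the quotients $\mathcal{N}^i/\mathcal{N}^{i+1}$ are $\mathcal{O}_{X_{\mathrm{red}}}$-modules on the $d$-dimensional scheme $X_{\mathrm{red}}$ that happen to be supported in dimension at most $d-1$, so one still needs the support-dimension dévissage of loc.\ cit.\ rather than the nilpotency filtration.

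The genuine gap is the final sentence, ``invoke the known existence result for volumes on reduced proper schemes.'' Over a field that is not algebraically closed this is not an independently available classical fact: the Okounkov-body results of Lazarsfeld--Musta\c{t}\u{a} and Kaveh--Khovanskii require $k=\bar{k}$, and the arbitrary-field statement rests on \cite[Theorem 8.1]{Cut01} (\cref{thm: existence_volume_linear_series_on_varieties} here), which is the actual content of the theorem you are proving; \cite[Theorem 10.7]{Cut01} is essentially your reduction combined with that result. Moreover, even granting the variety case, two further steps are hidden in your black box: (i) a general reduced proper scheme must be decomposed into its irreducible components, with the resulting kernel and cokernel sheaves controlled as in \cref{prop: invertible_times_coherent}; and (ii) on each component, \cref{thm: existence_volume_linear_series_on_varieties} only gives the limit along multiples of the index $m(L)$, so one must separately argue that a line bundle of positive volume on a variety has index one before the limit over all $n$ exists. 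None of these steps is hard, but together they are where the theorem actually lives, and your proposal defers all of them to an appeal that, as stated, is circular in the arbitrary-field setting.
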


An example of a line bundle on a projective variety where the limit in Theorem \ref{Theorem14a} is an irrational number is given in Example 4 of Section 7 of \cite{CS}.

\cref{Theorem14a} has been proven by Lazarsfeld and Musta\c{t}\u{a} \cite{LazMus} for big line bundles and by Kaveh and Khovanskii \cite{KavKov} for graded linear series (see definition below), both under the assumption that $k$ is an algebraically closed field. These proofs use a method invented by Okounkov \cite{Ok} which reduces the situation to a problem of
counting points in an integral semigroup. In these proofs, the assumption that the base field is algebraically closed is necessary. The proof of \cref{Theorem14a} also uses Okounkov's technique. 

Recall that $X$ is a proper scheme over a field $k$. To any line bundle $\mathscr{L}$ on $X$ we can associate a \textit{section ring} 

$$ R(\mathscr{L}) \coloneqq \displaystyle \bigoplus_{n \geq 0} \coh^{0}(X,\mathscr{L}^{n}). $$

\noindent Under the inclusion $k \subset \mathcal{O}_{X}(X)$, this is a graded $k$-algebra. One is naturally led to consider graded subalgebras of this section ring. For example, fix a line bundle $\mathscr{L}$ and let $X_{red}$ be the reduced scheme associated to $X$. For all $n \in  \mathbb{N}$, the natural morphism $\mathcal{O}_{X} \rightarrow \mathcal{O}_{X_{red}}$ can be tensored with $\mathscr{L}^n$ to induce maps of vector spaces

$$ \Phi_{n}\colon \coh^{0}(X,\mathscr{L}^{n}) \rightarrow \coh^{0}(X_{red}, \left(\mathscr{L}|_{X_{red}}\right)^{n}). $$

\noindent Since the global section functor is not right-exact, it is not in general true that the $k$-algebra

$$ L \coloneqq \displaystyle \bigoplus_{n \geq 0} \Phi_{n}\left(\coh^{0}(X,\mathscr{L}^{n})\right)$$

\noindent is equal to the section ring associated to $\mathscr{L}|_{X_{red}}$ on $X_{red}$. It is however the case that $L$ is a $k$-subalgebra of $R(\mathscr{L}|_{X_{red}})$.

A \textit{graded linear series} on $X$, often simply called a \textit{linear series}, is a graded $k$-subalgebra $L=\oplus_{n\ge 0}L_n$ of a section ring $R(\mathscr{L})$ of some line bundle $\mathscr L$ on $X$. The linear series $L$ is said to be \textit{complete} if $L=R(\mathscr{L})$. We define the \textit{volume} of a linear series $L$ on $X$ as follows:

$$ \vol(L) \coloneqq \limsup_{n \to \infty}\dfrac{\dim_{k}L_n}{n^d/d!}.$$

\noindent Notice that when $L$ is a complete linear series, we recover the definition of volume of the corresponding line bundle.

The following theorem is a consequence of \cite[Theorem 1.4]{Cut01} and \cite[Theorem 10.3]{Cut01}. 

\begin{theorem}\label{TheoremI1} Suppose that $X$ is a $d$-dimensional projective scheme over a field $k$ with 
$d>0$. Let $\mathcal N$ be the nilradical of $X$. Then, the following are equivalent:
\begin{enumerate}
\item[1)] For every  graded linear series $L$ on $X$ there exists a positive integer $r$ (depending on $L$) such that the limit
$$
\lim_{n\rightarrow\infty}\frac{\dim_k L_{rn}}{n^d}
$$
exists.
\item[2)] $\dim(\Supp(\mathcal{N}))< d$.
\end{enumerate}
\end{theorem}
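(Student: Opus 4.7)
The theorem has two implications, and I would treat them separately, in both cases through semigroup methods.

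For $(2)\Rightarrow(1)$, the plan is to upgrade \cref{Theorem14a} from complete to arbitrary graded linear series. Pick a flag of subvarieties on a top-dimensional component of $X_{red}$ to obtain an Okounkov-type valuation $\nu\colon L_n\setminus\{0\}\to \mathbb{Z}^d$, composed with the restriction map $\Phi_n\colon L_n\to \coh^0(X_{red},\mathscr{L}^n|_{X_{red}})$. Using the finite filtration of $\mathcal{O}_X$ by powers of $\mathcal{N}$, whose successive quotients are coherent $\mathcal{O}_{X_{red}}$-modules supported on $\Supp\mathcal{N}$, one sees that $\dim_k \ker \Phi_n = o(n^d)$, so the asymptotic growth of $\dim_k L_n$ coincides with that of $|\nu(L_n\setminus\{0\})|$. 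Assembling the graded semigroup $\Sigma(L) \coloneqq \bigcup_n \nu(L_n\setminus\{0\}) \times \{n\} \subset \mathbb{N}^{d+1}$, a Khovanskii-type counting theorem applies along an arithmetic progression $rn$, where $r$ absorbs the index of the group generated by $\Sigma(L)$ in its saturation, yielding that $\lim_n \dim_k L_{rn}/n^d$ exists as the Euclidean volume of the associated Okounkov body. This is the content of \cite[Theorem 10.3]{Cut01}, which I would cite.

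For the contrapositive of $(1)\Rightarrow(2)$, assume $\dim(\Supp\mathcal{N})=d$. I aim to exhibit a single graded linear series $L$ such that for every positive integer $r$, the sequence $\dim_k L_{rn}/n^d$ fails to converge. I would localize on a $d$-dimensional component $Y$ of $X$ on which the nilradical has full support, fix a very ample $\mathscr{L}$, and exploit nilpotent global sections of $\mathcal{O}_Y$ to build a graded $k$-subalgebra of $R(\mathscr{L})$ whose graded dimensions realize a combinatorial semigroup $\Sigma\subset \mathbb{N}^{d+1}$ with $|\Sigma_n|/n^d$ oscillating between two distinct positive values along long intervals of degrees. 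Once $\dim_k L_n/n^d$ has two limit points $\alpha<\beta$ that are each attained on arbitrarily long progressions of consecutive integers, the subsequence $\dim_k L_{rn}/n^d = r^d\bigl(\dim_k L_{rn}/(rn)^d\bigr)$ inherits distinct limit points $r^d\alpha$ and $r^d\beta$, so convergence fails for every $r$ simultaneously.

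The main obstacle is the realization step: showing that every such combinatorial semigroup actually arises as the graded-dimension sequence of a $k$-subalgebra of some $R(\mathscr{L})$. This is precisely where generic nonreducedness in top dimension is essential, because nilpotent sections supply enough linear freedom in each $\coh^0(X,\mathscr{L}^n)$ to encode arbitrary semigroup data; on a generically reduced scheme the Okounkov valuation would constrain graded subalgebras too rigidly for such flexibility. This construction is implemented in \cite[Theorem 1.4]{Cut01}, which I would invoke to complete the argument.
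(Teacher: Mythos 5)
Your proposal matches the paper, which does not prove \cref{TheoremI1} directly but states it as a consequence of \cite[Theorem 1.4]{Cut01} (the counterexample construction giving $1)\Rightarrow 2)$ by contraposition) and \cite[Theorem 10.3]{Cut01} (the Okounkov-semigroup argument for $2)\Rightarrow 1)$); you cite exactly these two results and your sketches of their content, including the passage from two limit points realized on long runs of consecutive degrees to nonconvergence along every progression $rn$, are consistent with how the paper describes the argument. No discrepancy to report.
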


The implication $1) \implies 2)$ of \cref{TheoremI1} is proven by constructing, on any given projective scheme not satisfying  $2)$, a linear series $L$ such that the limit in $1)$ does not exist (for any $r$). In particular, the volume $\vol(L)$ does not exist as a limit. This linear series, however, is not a complete linear series. Therefore, the following question was raised in \cite{CN}.

\begin{question} \label{question}
Let $X$ be a projective scheme over a field and let $\mathscr L$ be a line bundle on $X$. Does the volume of $\mathscr L$ exist as a limit?
\end{question}

This question has a positive answer whenever $\dim(\Supp(\mathcal{N}))< d$ by \cref{Theorem14a}. In \cite{CN}, \cref{question} was answered affirmatively for codimension one subschemes of nonsingular varieties. In this paper, we also give a positive answer in the following two situations.

\begin{theorem} \label{thm: existence_limits_codimension1}

Let $X$ be a $(d+1)$-dimensional projective variety over an arbitrary field. Let $Y \subset X$ be a subscheme of codimension one. Assume that no component of $Y$ of maximal dimension is contained in the nonnormal locus of $X$. Let $\mathscr{L}$ be a line bundle on $Y$. Then, the limit

$$
\lim_{n\rightarrow \infty}\frac{\dim_k \coh^0(Y,\mathscr L^n)}{n^d}
$$

\noindent exists. That is, $\vol(\mathscr{L})$ exists as a limit.

\end{theorem}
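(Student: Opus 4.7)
The strategy is to reduce, through a sequence of steps each preserving $h^0(Y, \mathscr L^n)$ up to an $O(n^{d-1})$ error, to an application of Theorem \ref{Theorem14a}. \emph{Step 1:} Let $\pi: \tilde X \to X$ be the normalization; it is finite and an isomorphism over the normal locus of $X$, which by hypothesis contains the generic points of all $d$-dimensional components of $Y$. Set $\tilde Y := \pi^{-1}(Y)$. The kernel and cokernel of the natural map $\mathcal{O}_Y \to \pi_* \mathcal{O}_{\tilde Y}$ are supported in dimension strictly less than $d$, so twisting by $\mathscr L^n$ and taking cohomology yields $|h^0(Y, \mathscr L^n) - h^0(\tilde Y, \pi^* \mathscr L^n)| = O(n^{d-1})$. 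We may therefore assume $X$ is normal.

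\emph{Step 2:} Let $D_1, \ldots, D_s$ be the prime Weil divisors underlying the $d$-dimensional components of $Y_{red}$, and set $m_i := \mathrm{length}_{\mathcal{O}_{X, \eta_i}} \mathcal{O}_{Y, \eta_i}$ (well-defined since $\mathcal{O}_{X, \eta_i}$ is a DVR by Serre's condition $R_1$). Put $D := \sum_i m_i D_i$ and $\mathcal{O}_D := \mathcal{O}_X / \mathcal{O}_X(-D)$, where $\mathcal{O}_X(-D)$ is the associated rank-one reflexive sheaf. Then $\mathcal{I}_Y$ and $\mathcal{O}_X(-D)$ coincide at every codimension-one point of $X$, so the induced surjection $\mathcal{O}_Y \twoheadrightarrow \mathcal{O}_D$ has kernel supported in dimension $< d$, and the argument of Step 1 gives $|h^0(Y, \mathscr L^n) - h^0(D, \mathscr L^n|_D)| = O(n^{d-1})$.

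\emph{Step 3:} The nilradical $\mathcal{N}_D$ of $\mathcal{O}_D$ is nilpotent, say $\mathcal{N}_D^M = 0$. The successive quotients $\mathcal{F}_j := \mathcal{N}_D^j / \mathcal{N}_D^{j+1}$, for $0 \le j < M$, are coherent $\mathcal{O}_{D_{red}}$-modules with $(\mathcal{F}_j)_{\eta_i}$ one-dimensional over $\kappa(\eta_i)$ when $m_i > j$ and zero otherwise, so $\mathcal{F}_j$ is generically of rank one on the reduced projective subscheme $D_{red}^{(j)} := \bigcup_{m_i > j} D_i$. For each $j$ I would construct a line bundle $\mathscr M_j$ on $D_{red}^{(j)}$ together with a morphism $\mathcal{F}_j \to \mathscr M_j$ whose kernel and cokernel are supported in dimension $< d$. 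This would yield $|h^0(\mathcal{F}_j \otimes \mathscr L^n) - h^0(\mathscr M_j \otimes \mathscr L^n|_{D_{red}^{(j)}})| = O(n^{d-1})$, and the latter limit exists by Theorem \ref{Theorem14a} applied to the line bundle $\mathscr M_j \otimes \mathscr L|_{D_{red}^{(j)}}$ on the reduced scheme $D_{red}^{(j)}$. An induction on $j$ along the short exact sequences $0 \to \mathcal{F}_j \otimes \mathscr L^n \to \mathscr L^n|_{V(\mathcal{N}_D^{j+1})} \to \mathscr L^n|_{V(\mathcal{N}_D^j)} \to 0$ would then give the existence of $\lim_{n \to \infty} h^0(D, \mathscr L^n|_D)/n^d$.

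The main obstacle is Step 3. First, a generically rank-one coherent sheaf on a possibly nonnormal reduced scheme need not embed into any line bundle, so constructing the $\mathscr M_j$ requires care --- likely by first passing to a partial normalization of $D_{red}^{(j)}$ and transferring the volume calculation back via an analogue of Step 1. Second, controlling the $H^1$-error in the induction is delicate: without any bigness hypothesis on $\mathscr L$, $h^1$-terms of order $n^d$ are not a priori excluded, so the argument must exploit the specific structure of $\mathcal{F}_j$ inherited from the DVR local rings $\mathcal{O}_{X, \eta_i}$ at the generic points of $Y$'s maximal-dimensional components.
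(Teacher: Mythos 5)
Your Steps 1 and 2 are sound and arrive essentially where the paper's own reduction arrives: after passing to the normalization and discarding sheaves supported in dimension less than $d$, one is reduced to the subscheme of a normal variety cut out by $\bigcap_i\mathcal{P}_i^{(m_i)}$ (your $\mathcal{O}_X(-D)$ is exactly this intersection of symbolic powers; the paper reaches the same subscheme via the normalized blow-up of $\mathcal{I}_Y$ together with \cref{lem: principal_ideal_normal_ring}). The genuine gap is Step 3, and it is the one you flagged yourself. Taking global sections in your inductive sequence gives
$$ h^{0}\bigl(V(\mathcal{N}_D^{\,j+1}),\mathscr{L}^{n}\bigr)=h^{0}\bigl(\mathcal{F}_j\otimes\mathscr{L}^{n}\bigr)+\dim_k L_n, $$
where $L_n$ is the \emph{image} of the restriction map to $V(\mathcal{N}_D^{\,j})$, not all of $h^{0}(V(\mathcal{N}_D^{\,j}),\mathscr{L}^{n})$; the existence of the limits for the sub and for the quotient gives no control over $\lim_n\dim_kL_n/n^d$, and there is no $h^1$ bound to fall back on. The missing idea is not a finer analysis of $h^1$ but the observation that $L=\oplus_nL_n$ is a graded linear series on the reduction, together with \cref{thm: existence_volume_linear_series_on_varieties}: on a \emph{variety}, every graded linear series has a limit along multiples of its index, and the Case 1/Case 2 analysis of \cref{prop: all_depends_on _volume_of_nilradical} shows that for this particular $L$ either $m(L)=1$ or $\vol(L)=0$, which is what closes the induction. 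To make that proposition available you must separate $D$ into its irreducible pieces $Y_i^{(m_i)}$ \emph{before} dealing with nilpotents, as the paper does via $0\to\mathcal{O}_D\to\prod_i\mathcal{O}_{Y_i^{(m_i)}}\to\mathcal{C}\to0$ and \cref{prop: existence_of_limits_on_symbolic_power_of_integral_subschme_of_projective_scheme}; your filtration runs across all components simultaneously, so the image linear series lives on a reducible reduced scheme, where only the weaker conclusion of \cref{TheoremI1} (existence of the limit along multiples of some $r$ depending on the linear series) is available, and that is not enough.

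A secondary inaccuracy: \cref{Theorem14a} is not the right tool for the graded pieces, since what you need is $\lim_n h^{0}(\mathcal{F}_j\otimes\mathscr{L}^{n})/n^{d}$, in which only $\mathscr{L}$ is raised to powers; this is $\vol_{\mathscr{L}}(\mathcal{F}_j)$, not the volume of the line bundle $\mathscr{M}_j\otimes\mathscr{L}$. The correct statements are \cref{lem: coherent_sheaves on irreducible} and \cref{prop: invertible_times_coherent}, which handle generically free coherent sheaves directly; in particular your first ``obstacle,'' embedding $\mathcal{F}_j$ into a line bundle $\mathscr{M}_j$, can be avoided entirely. But repairing this still leaves the induction blocked by the image-of-restriction problem above.
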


\begin{theorem} \label{thm: nilradical_squared_equals_zero}

Let $X$ be a $d$-dimensional projective scheme over an arbitrary field. Suppose that the nilradical $\mathcal{N}$ of $X$ has the property that $\mathcal{N}^{2}=0$. Let $\mathscr{L}$ be a line bundle on $X$. Then, the limit

$$\lim_{n\rightarrow \infty}\frac{\dim_k \coh^0(X,\mathscr L^n)}{n^d}$$

\noindent exists. That is, $\vol(\mathscr{L})$ exists as a limit.
\end{theorem}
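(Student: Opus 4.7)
The plan is to split the dimension count using the nilradical filtration, reducing the question to two asymptotic statements on the reduced projective scheme $X_{red}$. Since \cref{Theorem14a} already covers the case $\dim(\Supp(\mathcal{N}))<d$, I may assume $\dim(\Supp(\mathcal{N}))=d$. Because $\mathcal{N}^2=0$, the sheaf $\mathcal{N}$ is naturally an $\mathcal{O}_{X_{red}}$-module, so tensoring the exact sequence $0\to\mathcal{N}\to\mathcal{O}_X\to\mathcal{O}_{X_{red}}\to 0$ by the locally free sheaf $\mathscr{L}^n$ remains exact and yields
\[
0\to\mathcal{N}\otimes_{\mathcal{O}_{X_{red}}}(\mathscr{L}|_{X_{red}})^n\to\mathscr{L}^n\to(\mathscr{L}|_{X_{red}})^n\to 0.
\]
Passing to global sections gives the key identity
\[
\dim_k \coh^0(X,\mathscr{L}^n)\;=\;\dim_k M_n\;+\;\dim_k \coh^0\bigl(X_{red},\,\mathcal{N}\otimes(\mathscr{L}|_{X_{red}})^n\bigr),
\]
where $M_n$ is the image of the restriction $\coh^0(X,\mathscr{L}^n)\to \coh^0(X_{red},(\mathscr{L}|_{X_{red}})^n)$ and $M=\bigoplus_n M_n$ is a graded linear series on $X_{red}$. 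It then suffices to show that each summand, divided by $n^d$, converges as $n\to\infty$.

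For the coherent-sheaf term, I would extend \cref{Theorem14a} from line bundles to coherent sheaves on reduced projective schemes of dimension $d$: for any reduced projective $Y$ of dimension $d$, coherent $\mathcal{F}$ on $Y$, and line bundle $\mathscr{M}$ on $Y$, the limit $\lim_n\dim_k \coh^0(Y,\mathcal{F}\otimes\mathscr{M}^n)/n^d$ exists. Using generic freeness on each top-dimensional irreducible component of $Y$, one builds a finite filtration of $\mathcal{F}$ whose associated graded pieces are either supported in codimension $\geq 1$ (contributing $o(n^d)$) or are rank-one torsion-free sheaves on integral closed subvarieties of top dimension. For the latter, comparison with the structure sheaf of the subvariety via multiplication by a nonzero rational section reduces, modulo $o(n^d)$, to \cref{Theorem14a} on each integral component. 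Applying this extension with $Y=X_{red}$ and $\mathcal{F}=\mathcal{N}$ handles the second summand.

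The main obstacle is to show $\lim_n\dim_k M_n/n^d$ exists, since $M$ is in general not a complete linear series: \cref{Theorem14a} does not apply, and \cref{TheoremI1} only yields $\lim_n\dim_k M_{rn}/n^d$ for some positive integer $r$. To upgrade to a limit over all $n$, I would adapt Cutkosky's Okounkov-body proof of \cref{Theorem14a}: choose a top-dimensional irreducible component $Z\subset X_{red}$ with generic point $\eta$, a valuation $\nu:k(Z)^*\to\mathbb{Z}^d$ arising from a flag on $Z$, and a trivialization of $\mathscr{L}^n$ at $\eta$. The hypothesis $\mathcal{N}^2=0$ forces $\mathcal{O}_{X,\eta}$ to be an Artinian local ring with $\mathfrak{m}^2=0$, so global sections in $\coh^0(X,\mathscr{L}^n)$ can be decomposed at $\eta$ into a \emph{reduced part} (which, after restriction, is exactly an element of $M_n$) and a nilpotent part. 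Applying $\nu$ to the reduced parts produces a graded semigroup $\Gamma_M\subset\mathbb{Z}^d\times\mathbb{N}$ whose $n$th fiber has cardinality $\dim_k M_n$. The technical crux is to verify the Khovanskii--Okounkov regularity hypothesis for $\Gamma_M$---especially that the closed convex cone it generates in $\mathbb{R}^{d+1}$ has full dimension $d+1$---so that the Khovanskii--Okounkov semigroup theorem yields the desired limit. Combining with the previous step then completes the proof.
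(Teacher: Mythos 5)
Your reduction via $0\to\mathcal{N}\to\mathcal{O}_X\to\mathcal{O}_{X_{red}}\to 0$ and the observation that $\mathcal{N}^2=0$ makes $\mathcal{N}$ an $\mathcal{O}_{X_{red}}$-module are exactly the paper's starting point, and your proposed extension of the limit statement to coherent sheaves on reduced schemes is essentially \cref{prop: invertible_times_coherent} (proved there by the same generic-freeness device you sketch). The genuine gap is in your treatment of the linear series $M=\oplus_n M_n$. You correctly identify that \cref{thm: existence_volume_linear_series_on_varieties} only gives convergence along multiples of the index $m(M)$, and you propose to fix this by building an Okounkov semigroup $\Gamma_M$ and ``verifying the Khovanskii--Okounkov regularity hypothesis.'' But that verification is precisely the missing content, and it cannot be done in general for an arbitrary graded linear series: the counterexamples behind the implication $1)\Rightarrow 2)$ of \cref{TheoremI1} are linear series on generically nonreduced schemes whose restriction images have index $>1$ and positive volume, so that $\dim_k M_n/n^d$ oscillates between $0$ and a positive value. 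Nothing in your construction of $\Gamma_M$ (decomposing stalks at $\eta$ into reduced and nilpotent parts) rules this out; the full-dimensionality of the cone is not a technical check but the actual theorem to be proved, and your proposal gives no mechanism for it.

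What is needed, and what the paper supplies in \cref{prop: all_depends_on _volume_of_nilradical}, is a dichotomy established by a separate argument: using \cref{lem: existence_nzd} one finds non-zero-divisor sections $\alpha\in\coh^0(X,\mathscr{A})$ and $\beta\in\coh^0(X,\mathscr{A}\otimes\mathscr{L})$ for a suitable ample $\mathscr{A}$; if some restriction map $\coh^0(X,\mathscr{A}^{-1}\otimes\mathscr{L}^{n_0})\to\coh^0(X_{red},\cdot)$ is nonzero, multiplying by $\alpha$ and $\beta$ produces nonzero elements of $M$ in the consecutive degrees $n_0$ and $n_0+1$, forcing $m(M)=1$ so that \cref{thm: existence_volume_linear_series_on_varieties} applies with $m=1$; otherwise all such maps vanish and one shows directly that $\vol(\mathscr{L})=\vol_{\mathscr{L}}(\mathcal{N})$, with $\vol(M)=0$. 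You need this (or an equivalent) argument; without it the proof does not close. A secondary point: your valuation is built from a flag on a single top-dimensional component $Z$ of $X_{red}$, which cannot control $\dim_k M_n$ when $X_{red}$ has several top-dimensional components; the paper avoids this by first passing to a primary decomposition of the zero ideal sheaf, reducing to irreducible $X$ (and checking that the condition $\mathcal{N}^2=0$ is inherited by the primary components), a reduction your proposal also omits.
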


I thank my advisor Dale Cutkosky for his invaluable help during the preparation of this work.

\section{Notation and Terminology} \label{sec: Notation}

Let $X$ be a $d$-dimensional proper scheme over an arbitrary field. The scheme $X$ is said to be a \textit{variety} if it is integral. For any coherent $\mathcal{O}_{X}$-module $\mathcal{F}$, we will use the following notation:

$$h^{i}(X,\mathcal{F}) = \dim_k\coh^i(X,\mathcal{F}).$$

\noindent Let $\mathscr{L}$ and $\mathcal{F}$ represent an invertible sheaf and a coherent sheaf on $X$, respectively. We define the 
\textit{volume of $\mathcal{F}$ with respect to $\mathscr{L}$} to be

$$ \vol_{\mathscr{L}}(\mathcal{F}) \coloneqq \limsup \limits_{n \to \infty} \dfrac{h^{0}\left(X, \mathcal{F} \otimes \mathscr{L}^{n} \right)}{n^{d}/d!}.$$

\noindent We say that the volume of $\mathcal{F}$ with respect to $\mathscr{L}$ exists as a limit if the limsup above is actually a limit. Notice that $\vol_{\mathscr{L}}(\mathcal{O}_{X}) = \vol(\mathscr{L})$ is the volume of the invertible sheaf $\mathscr{L}$ as defined in \cref{Intro}. In fact, \cref{lem: tensoring_invertible_preserves_limit} shows that if $\vol(\mathscr{L})$ exists as a limit, then $\vol_{\mathscr{L}}(\mathcal{F}) = \vol(\mathscr{L})$ whenever $\mathcal{F}$ is invertible.

The \textit{nilradical} of a scheme $X$ is the kernel of the natural surjection $\mathcal{O}_{X} \rightarrow \mathcal{O}_{X_{red}}$, where $X_{red}$ is the reduced scheme associated to $X$. The nilradical of $X$ will be denoted $\mathcal{N}(X)$ or simply $\mathcal{N}$ if there is no danger of confusion.

Following \cite[Lecture 9]{Mum}, we say that a global section $s$ of a line bundle $\mathscr{L}$ on a scheme $X$ is a \textit{non-zero-divisor} if the map $\mathcal{O}_{X} \rightarrow \mathscr{L}$ induced by $s$ is injective. Locally, this means that for any point $p \in X$ and some (any) trivialization $\varphi \colon \mathscr{L}_p \rightarrow \mathcal{O}_{X,p}$, the element $\varphi(s)$ is a non-zero-divisor in the local ring $\mathcal{O}_{X,p}$.

Let $L$ be a linear series on $X$. When $X$ is a variety, we define the \textit{index of $L$} to be natural number

$$ m(L ) \coloneqq [\mathbb{Z}: G],$$

\noindent where $G$ is the group generated by the semi-group $ \{ n \in \mathbb{N} \colon L_n \neq 0 \}$. We define the \textit{index} of a line bundle $\mathscr{L}$ on a variety to be the index of the complete linear series $R(\mathscr{L})$. 

If $X$ is an irreducible scheme, we define the rank of any generically free coherent $\mathcal{O}_{X}$-module $\mathcal{F}$ to be the rank of the free $\mathcal{O}_{X,\xi}$-module $\mathcal{F}_{\xi}$, where $\xi$ is the generic point of $X$.
 
Throughout this paper, the term ``divisor" will always refer to a Cartier divisor. For divisors $A$ and $B$, we will write $A \sim B$ to indicate linear equivalence.

We will often use the term ``line bundle" to refer to invertible sheaves.

\section{Preliminary Results} \label{sec: Preliminaries}

\subsection{Volumes and Exact Sequences}

\begin{lemma}\label{lem: existence_limits_SES}

Let $X$ be a proper scheme over a field $k$. Let $\mathscr{L}$ be an invertible sheaf. Let 

$$ 0 \rightarrow \mathcal{F}_{1} \rightarrow \mathcal{F}_{2} \rightarrow \mathcal{F}_{3} \rightarrow 0$$

\noindent be an exact sequence of coherent $\mathcal{O}_{X}$-modules. Let $\{1,2,3\} = \{i,j,k\}$. Suppose that the volume of $\mathcal{F}_{i}$ with respect to $\mathscr{L}$ exists as a limit and that $\mathcal{F}_{k}$ is supported on a closed subset of dimension strictly less than $\dim(X)$. Then, the volume of $\mathcal{F}_{j}$ with respect to $\mathscr{L}$ exists as a limit as well. Moreover, 

$$ \vol_{\mathscr{L}}(\mathcal{F}_{i})=\vol_{\mathscr{L}}(\mathcal{F}_{j}) .$$

\end{lemma}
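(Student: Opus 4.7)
The plan is to tensor the given short exact sequence with $\mathscr{L}^n$, which is exact since $\mathscr{L}^n$ is locally free of rank one. Passing to the long exact sequence in cohomology, I extract the two basic chains of inequalities
$$ h^0(X,\mathcal{F}_1 \otimes \mathscr{L}^n) \leq h^0(X,\mathcal{F}_2 \otimes \mathscr{L}^n) \leq h^0(X,\mathcal{F}_1 \otimes \mathscr{L}^n) + h^0(X,\mathcal{F}_3 \otimes \mathscr{L}^n) $$
and
$$ h^0(X,\mathcal{F}_2 \otimes \mathscr{L}^n) - h^0(X,\mathcal{F}_1 \otimes \mathscr{L}^n) \leq h^0(X,\mathcal{F}_3 \otimes \mathscr{L}^n) \leq h^0(X,\mathcal{F}_2 \otimes \mathscr{L}^n) + h^1(X,\mathcal{F}_1 \otimes \mathscr{L}^n), $$
the first coming from the left end $0 \to H^0(\mathcal{F}_1 \otimes \mathscr{L}^n) \to H^0(\mathcal{F}_2 \otimes \mathscr{L}^n) \to H^0(\mathcal{F}_3 \otimes \mathscr{L}^n)$ and the second from continuing one step into $H^1(\mathcal{F}_1 \otimes \mathscr{L}^n)$. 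The quantitative input I will use is the polynomial growth statement that for any coherent sheaf $\mathcal{G}$ on a proper scheme whose support has dimension $e$, one has $h^i(X,\mathcal{G} \otimes \mathscr{L}^n) = O(n^e)$ for every $i$; applied to $\mathcal{F}_k$ this gives in particular $h^0(X,\mathcal{F}_k \otimes \mathscr{L}^n) = O(n^{d-1})$ and, when needed, $h^1(X,\mathcal{F}_k \otimes \mathscr{L}^n) = O(n^{d-1})$.

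I then split into cases according to the position of $k$. If $k=3$, the first displayed chain sandwiches $h^0(X,\mathcal{F}_2 \otimes \mathscr{L}^n)$ between $h^0(X,\mathcal{F}_1 \otimes \mathscr{L}^n)$ and $h^0(X,\mathcal{F}_1 \otimes \mathscr{L}^n) + O(n^{d-1})$. Dividing by $n^d/d!$, the two quotients differ by $O(1/n)$, so existence of the limit passes between indices $i$ and $j$ and the limits agree. If $k=1$, the second chain together with the bounds on $h^0$ and $h^1$ of $\mathcal{F}_1 \otimes \mathscr{L}^n$ yields $|h^0(X,\mathcal{F}_3 \otimes \mathscr{L}^n) - h^0(X,\mathcal{F}_2 \otimes \mathscr{L}^n)| = O(n^{d-1})$, and the same squeeze finishes the argument. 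If $k=2$, then $\mathcal{F}_1$ is a subsheaf and $\mathcal{F}_3$ a quotient of $\mathcal{F}_2$, so both have support of dimension $<d$; all three volumes vanish and the conclusion is trivial.

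The main obstacle I foresee is the bound $h^1(X,\mathcal{F}_1 \otimes \mathscr{L}^n) = O(n^{d-1})$ needed in the case $k=1$, since no positivity of $\mathscr{L}$ is assumed. The cleanest way to obtain it is to reduce to the scheme-theoretic support $Z$ of $\mathcal{F}_1$ via the projection formula, so that $H^i(X,\mathcal{F}_1 \otimes \mathscr{L}^n) \cong H^i(Z,\mathcal{F}_1|_Z \otimes (\mathscr{L}|_Z)^n)$ with $\dim Z < d$, and then invoke the classical fact that for any coherent sheaf and any line bundle on a proper scheme of dimension $e$, each twisted cohomology group has dimension $O(n^e)$. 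Once this is in hand, everything else in the argument is formal bookkeeping with the long exact sequence and the squeeze principle.
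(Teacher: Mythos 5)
Your proof is correct and takes essentially the same route as the paper's: tensor the sequence with $\mathscr{L}^n$, pass to the long exact sequence in cohomology, and absorb the error terms using the bound $h^i(X,\mathcal{G}\otimes\mathscr{L}^n)=O(n^{\dim\Supp(\mathcal{G})})$ for all $i$ (the paper's citation of \cite[Proposition 1.31]{Deb}), which in particular supplies the $h^1$ estimate you flag as the main obstacle in the case $k=1$. Your case analysis and squeeze argument simply make explicit the bookkeeping the paper leaves as a sketch.
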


The proof of \cref{lem: existence_limits_SES} consists of tensoring the given exact sequence with powers of $\mathscr{L}$, taking cohomology, and using the fact that for any coherent $\mathcal{O}_{X}$-module $\mathcal{F}$ and for all $i$ we have that

$$ \lim \limits_{n \to \infty} \dfrac{h^{i}(X,\mathcal{F} \otimes \mathscr{L}^{n})}{n^{\dim(X)}} =0$$

\noindent whenever $\dim(\Supp(\mathcal{F})) < \dim(X)$ (\cite[Proposition 1.31]{Deb}).

The following corollary shows that the existence of volumes as limits is preserved by morphisms of sheaves that are isomorphisms away from closed sets of dimension smaller than the dimension of the ambient scheme.

\begin{corollary} \label{cor: existence_limits_isomorphism_away_from_closed_sets}

Let $X$ be a proper scheme over a field $k$. Let $\mathscr{L}$ be an invertible sheaf. Let 

$$ 0 \rightarrow \mathcal{K} \rightarrow \mathcal{F} \rightarrow \mathcal{G} \rightarrow \mathcal{C} \rightarrow 0$$

\noindent be an exact sequence of coherent $\mathcal{O}_{X}$-modules. Suppose that $\mathcal{K}$ and $\mathcal{C}$ are supported on closed subsets of dimension strictly less than $\dim(X)$. Then the volume of $\mathcal{F}$ with respect to $\mathscr{L}$ exists as a limit if and only if the volume of $\mathcal{G}$ with respect to $\mathscr{L}$ exists as a limit and then 

$$ \vol_{\mathscr{L}}(\mathcal{F})=\vol_{\mathscr{L}}(\mathcal{G}) .$$

\end{corollary}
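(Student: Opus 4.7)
The plan is to split the four-term exact sequence into two short exact sequences by factoring through the image $\mathcal{I} \coloneqq \operatorname{im}(\mathcal{F}\to\mathcal{G})$, which is a coherent $\mathcal{O}_X$-module since it is the image of a morphism between coherent sheaves. The given exact sequence then decomposes as
$$ 0 \rightarrow \mathcal{K} \rightarrow \mathcal{F} \rightarrow \mathcal{I} \rightarrow 0 \qquad \text{and} \qquad 0 \rightarrow \mathcal{I} \rightarrow \mathcal{G} \rightarrow \mathcal{C} \rightarrow 0.$$

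Now I would apply \cref{lem: existence_limits_SES} to each short exact sequence in turn. In the first sequence, $\mathcal{K}$ is supported on a closed subset of dimension strictly less than $\dim(X)$ by hypothesis, so the lemma (applied with $\mathcal{F}_k = \mathcal{K}$) tells us that $\vol_{\mathscr{L}}(\mathcal{F})$ exists as a limit if and only if $\vol_{\mathscr{L}}(\mathcal{I})$ exists as a limit, and in that case the two volumes are equal; here I use that the lemma is symmetric in the two non-small slots, since either $\mathcal{F}$ or $\mathcal{I}$ may serve as the term whose volume is assumed to exist. In the second sequence, $\mathcal{C}$ is supported on a closed subset of dimension strictly less than $\dim(X)$, so the same lemma (now with $\mathcal{F}_k = \mathcal{C}$) gives that $\vol_{\mathscr{L}}(\mathcal{I})$ exists as a limit if and only if $\vol_{\mathscr{L}}(\mathcal{G})$ does, again with equal values.

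Chaining the two equivalences yields the desired biconditional, and chaining the equalities gives $\vol_{\mathscr{L}}(\mathcal{F}) = \vol_{\mathscr{L}}(\mathcal{I}) = \vol_{\mathscr{L}}(\mathcal{G})$ whenever either side exists. There is no real obstacle here: the argument is entirely a formal consequence of \cref{lem: existence_limits_SES}, and the only point worth mentioning is the mild bookkeeping that the lemma is used in both directions (once to transfer existence from $\mathcal{F}$ to $\mathcal{I}$ or vice versa, and once more to transfer it between $\mathcal{I}$ and $\mathcal{G}$).
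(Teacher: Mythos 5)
Your proposal is correct and is essentially identical to the paper's proof: the paper also splits the four-term sequence into $0 \to \mathcal{K} \to \mathcal{F} \to \mathcal{F}/\mathcal{K} \to 0$ and $0 \to \mathcal{F}/\mathcal{K} \to \mathcal{G} \to \mathcal{C} \to 0$ (your $\mathcal{I}$ is exactly $\mathcal{F}/\mathcal{K}$) and applies \cref{lem: existence_limits_SES} to each piece in both directions.
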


\begin{proof}

We break up the given exact sequence into the following two exact sequences:

$$0 \rightarrow \mathcal{K} \rightarrow \mathcal{F} \rightarrow \mathcal{F}/\mathcal{K} \rightarrow 0$$

\noindent and

$$0 \rightarrow \mathcal{F}/\mathcal{K} \rightarrow \mathcal{G} \rightarrow \mathcal{C} \rightarrow 0.$$

\noindent If the volume of $\mathcal{F}$ with respect to $\mathscr{L}$ exists as a limit, we apply \cref{lem: existence_limits_SES} to the first sequence to conclude that the same is true for $\mathcal{F}/\mathcal{K}$ and that $\vol_{\mathscr{L}}(\mathcal{F}/\mathcal{K})=\vol_{\mathscr{L}}(\mathcal{F})$. Then, by \cref{lem: existence_limits_SES} and the second sequence, we see that the volume of $\mathcal{G}$ with respect to $\mathscr{L}$ exists as a limit and $\vol_{\mathscr{L}}(\mathcal{G})=\vol_{\mathscr{L}}(\mathcal{F}/\mathcal{K})=\vol_{\mathscr{L}}(\mathcal{F})$.

An analogous argument shows that if the volume of $\mathcal{G}$ with respect to $\mathscr{L}$ exists as a limit, then the same is true for $\mathcal{F}$ and $\vol_{\mathscr{L}}(\mathcal{F})=\vol_{\mathscr{L}}(\mathcal{G})$.  

\end{proof}

\subsection{Non-zero-divisors, Invertible Sheaves, and Cartier Divisors}

\begin{lemma}\label{lem: existence_nzd}
Let $X$ be a projective scheme over a field.  Let $\mathscr{M}$ be an invertible sheaf on $X$ and let $\mathscr{A}$ be an ample invertible sheaf. Then, there exists an $n_{0} \in \mathbb{N}$ such that $\coh^{0}(X, \mathscr{A}^n \otimes \mathscr{M})$ contains a non-zero-divisor for $n \geq n_0$.
\end{lemma}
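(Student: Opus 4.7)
The plan is to reformulate the existence of a non-zero-divisor as an avoidance condition at the finitely many associated points of $X$, and then combine Serre vanishing with a union--complement argument. Recall that $s \in \coh^0(X, \mathscr{L})$ is a non-zero-divisor if and only if $s$ does not vanish at any associated point $p$ of $X$, equivalently, the germ $s_p$ generates $\mathscr{L}_p$ as an $\mathcal{O}_{X,p}$-module for every such $p$ (since at an associated point $\mathfrak{m}_{X,p}$ is itself an associated prime of $\mathcal{O}_{X,p}$). Let $p_1, \ldots, p_r$ be the associated points of $X$ and $Z_i \coloneqq \overline{\{p_i\}}$ with its reduced structure; each $Z_i$ is integral. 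Setting $\mathscr{L}_n \coloneqq \mathscr{A}^n \otimes \mathscr{M}$, the ``bad'' subset $V_i \coloneqq \{s \in \coh^0(X, \mathscr{L}_n) : s(p_i) = 0\}$ coincides with the kernel of the restriction $\coh^0(X, \mathscr{L}_n) \to \coh^0(Z_i, \mathscr{L}_n|_{Z_i})$, since on the integral $Z_i$ a section of the line bundle $\mathscr{L}_n|_{Z_i}$ vanishes at the generic point $p_i$ iff it vanishes identically. The task becomes to produce $s \in \coh^0(X, \mathscr{L}_n) \setminus \bigcup_{i=1}^{r} V_i$ for all $n$ sufficiently large.

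For $n \gg 0$, Serre vanishing $\coh^1(X, \mathcal{I}_{Z_i}\mathscr{L}_n) = 0$ makes each restriction $\coh^0(X, \mathscr{L}_n) \twoheadrightarrow \coh^0(Z_i, \mathscr{L}_n|_{Z_i})$ surjective; since $\mathscr{L}_n|_{Z_i}$ is ample on $Z_i$, its space of global sections is non-zero for $n$ large, so $V_i$ is a proper $k$-subspace of $k$-codimension $c_i(n) \coloneqq \dim_k \coh^0(Z_i, \mathscr{L}_n|_{Z_i})$. When $\dim Z_i \geq 1$, asymptotic Riemann--Roch makes $c_i(n)$ grow polynomially in $n$. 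If the base field is infinite, the proof finishes at once from the classical fact that a vector space over an infinite field cannot be covered by finitely many proper subspaces.

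The main obstacle is the case of a finite field $k = \mathbb{F}_q$: for a closed associated point $p_i$ (where $\dim Z_i = 0$) the codimension $c_i(n) = [\kappa(p_i):k]$ is a fixed constant independent of $n$, and the naive union bound $\sum_i q^{-c_i(n)}$ need not be less than $1$ (for instance, if $X$ has many $k$-rational associated points over a small field). To bypass this, I plan to apply Serre vanishing a second time to the reduced $0$-dimensional subscheme $Z_{\mathrm{cl}} \subset X$ supported on the closed associated points, obtaining for $n \gg 0$ a surjection
$$
\pi \colon \coh^0(X, \mathscr{L}_n) \twoheadrightarrow \coh^0(Z_{\mathrm{cl}}, \mathscr{L}_n|_{Z_{\mathrm{cl}}}) = \bigoplus_{\dim Z_i = 0} \kappa(p_i).
$$
Any section in the $\pi$-preimage of a vector with every coordinate non-zero is automatically non-vanishing at each closed associated point, and such target vectors always exist since $|\kappa(p_i)| \geq 2$; this preimage has cardinality $|\ker \pi|\prod_{\dim Z_i = 0}(|\kappa(p_i)| - 1)$, so it comprises a fixed positive fraction $\prod_{\dim Z_i = 0}(1 - |\kappa(p_i)|^{-1})$ of $\coh^0(X, \mathscr{L}_n)$, independent of $n$. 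On the other hand, for each non-closed associated point $p_j$ we have $|V_j|/|\coh^0(X, \mathscr{L}_n)| = q^{-c_j(n)} \to 0$. Since this fixed positive fraction strictly exceeds $\sum_{j : \dim Z_j \geq 1} q^{-c_j(n)}$ for $n$ sufficiently large, there exists a section in the preimage that further lies outside every $V_j$ coming from a non-closed associated point; such a section is the desired non-zero-divisor.
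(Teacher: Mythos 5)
Your proof is correct, but it takes a genuinely different route from the paper's. The paper sidesteps your entire finite-field difficulty with one trick: it chooses a \emph{closed} point $x_i$ on each associated subvariety $V_i$ and demands the strictly stronger condition that the section not vanish at $x_i$ (which forces nonvanishing at the generic point of $V_i$). Serre vanishing then gives, for $n\gg 0$, a surjection $\coh^0(X,\mathscr{A}^n\otimes\mathscr{M})\rightarrow\bigoplus_i k(x_i)$, and any preimage of a vector with all coordinates nonzero is the desired non-zero-divisor; this collapses closed and non-closed associated points into a single case and works uniformly over every field with no counting. You instead impose only the necessary conditions (nonvanishing at the generic points themselves), so the bad loci attached to positive-dimensional $Z_j$ are proper subspaces of growing codimension, while those attached to closed associated points have bounded codimension --- which is exactly why the naive union bound fails over small finite fields and you need the two-step argument: surject onto $\bigoplus_{\dim Z_i=0}\kappa(p_i)$ to secure a fixed positive density $\prod\bigl(1-|\kappa(p_i)|^{-1}\bigr)$ of sections handling the closed points, then observe that $\sum_j q^{-c_j(n)}\to 0$ beats it for $n\gg 0$. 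All the steps check out (the identification of non-zero-divisors with sections avoiding the associated points, the surjectivity and nonvanishing statements for $n\gg0$, and the cardinality estimate), so this is a valid, if more laborious, proof; what the paper's choice of auxiliary closed points buys is the elimination of the case analysis and of any dependence on the cardinality of the base field.
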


\begin{proof}

Let $V_1, \dots, V_s$ be the associated subvarieties of $X$ and choose closed points $x_i \in V_i$. Let $\mathcal I=\oplus_{i=1}^s\mathcal I_{x_i}$, where $\mathcal I_{x_i}$ is the ideal sheaf of the point $x_i$. We have a short exact sequence of $\mathcal O_X$-modules 
$$
0\rightarrow \mathcal I\rightarrow \mathcal O_X\rightarrow \bigoplus_{i=1}^s\mathcal O_{x_i}\rightarrow 0.
$$

\noindent Tensor this exact sequence with $ \mathscr{A}^n \otimes \mathscr{M}$ and take cohomology. By Serre's Vanishing Theorem, for $n\gg 0$, we have an exact sequence
$$	
\coh^0(X, \mathscr{A}^n \otimes \mathscr{M})\rightarrow \bigoplus_{i=1}^sk(x_i)\rightarrow 0,
$$
where $k(x_i)$ is the residue field of the point $x_i$. Thus, there exists a section $s\in \coh^0(X,\mathscr{A}^n \otimes \mathscr{L})$ which does not vanish at any of the $x_i$. It follows that $s$ is a non-zero-divisor on $\mathcal O_X$ since it does not vanish along any of the $V_i$.

\end{proof}

For any scheme $X$, the association $D \to \mathcal{O}_{X}(D)$ gives an injective homomorphism from the group of Cartier divisors modulo linear equivalence to $\Pic(X)$ (see \cite[Corollary II.6.14]{Har}). Nakai \cite{Nak} has shown that if X is a projective scheme over an infinite field, then this homomorphism is an isomorphism. We deduce the known fact that this is still the case for projective schemes over arbitrary fields. 
 
\begin{corollary}\label{cor: invertible_sheaves_from_divisors}

Let $X$ be a projective scheme over a field. Then, for any invertible sheaf $\mathscr{L}$, there exists a Cartier divisor $D$ such that $\mathscr{L} \simeq \mathcal{O}_{X}(D)$. Moreover, under the identification described above, $\Pic(X)$ is generated by effective divisors.

\end{corollary}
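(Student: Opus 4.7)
The plan is to use \cref{lem: existence_nzd} to realize $\mathscr{L}$ as the difference of two invertible sheaves that are each associated to an effective Cartier divisor.

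First I would fix an ample invertible sheaf $\mathscr{A}$ on $X$, which exists since $X$ is projective. Applying \cref{lem: existence_nzd} with $\mathscr{M}=\mathcal{O}_X$ and then with $\mathscr{M}=\mathscr{L}$, I can choose an $n$ large enough that both $\coh^0(X,\mathscr{A}^n)$ and $\coh^0(X,\mathscr{A}^n\otimes \mathscr{L})$ contain a non-zero-divisor; call these sections $s_1$ and $s_2$ respectively.

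Next I would recall the standard correspondence between non-zero-divisor sections of an invertible sheaf and effective Cartier divisors: a non-zero-divisor section $s$ of an invertible sheaf $\mathscr{M}$ gives an injection $\mathcal{O}_X \hookrightarrow \mathscr{M}$, and dualizing we obtain an injection $\mathscr{M}^{-1} \hookrightarrow \mathcal{O}_X$ whose image is locally principal, generated at each point by a non-zero-divisor. This defines an effective Cartier divisor $D$ with $\mathcal{O}_X(D)\simeq \mathscr{M}$. Applying this to $s_1$ and $s_2$ yields effective Cartier divisors $D_1, D_2$ with
$$\mathscr{A}^n \simeq \mathcal{O}_X(D_1), \qquad \mathscr{A}^n\otimes \mathscr{L} \simeq \mathcal{O}_X(D_2).$$
Tensoring the first isomorphism by $\mathscr{L}$ and comparing with the second, I obtain $\mathscr{L}\simeq \mathcal{O}_X(D_2-D_1)$, which proves the first claim. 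The same identity expresses the class of $\mathscr{L}$ in $\Pic(X)$ as the difference of the classes of the effective Cartier divisors $D_2$ and $D_1$, giving the moreover statement.

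The only delicate step is the passage from a non-zero-divisor global section to an effective Cartier divisor, and this is handled by the local description of non-zero-divisors given in \cref{sec: Notation} following \cite[Lecture 9]{Mum}: around each point $p$ a trivialization sends $s$ to a non-zero-divisor in $\mathcal{O}_{X,p}$, which is exactly a local equation for an effective Cartier divisor. Everything else is bookkeeping with line bundles, so I would expect no further obstacle.
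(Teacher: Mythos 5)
Your proposal is correct and follows essentially the same route as the paper: both apply \cref{lem: existence_nzd} twice (once for $\mathcal{O}_X$ and once for $\mathscr{L}$ twisted by a power of the ample sheaf) and then write $\mathscr{L}$ as $\mathcal{O}_X$ of a difference of the two resulting effective Cartier divisors. Your extra care in spelling out the passage from a non-zero-divisor section to an effective Cartier divisor is a point the paper leaves implicit, but it is the same argument.
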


\begin{proof}

Choose an ample line bundle $\mathscr{A}$ on X. After perhaps replacing $\mathscr{A}$ with a positive power of itself, we can use \cref{lem: existence_nzd} with $\mathscr{M} = \mathcal{O}_{X}$ to find a non-zero-divisor $t \in \coh^{0}(X, \mathscr{A})$. Then $\mathscr{A} \simeq \mathcal{O}_{X}(H)$, where $H\coloneqq div(t)$.

Again by \cref{lem: existence_nzd}, for some $n \in \mathbb{N}$, we can find a non-zero-divisor $s \in \coh^{0}(X, \mathcal{O}_{X}(nH) \otimes \mathscr{L})$. Thus, $\mathcal{O}_{X}(nH) \otimes \mathscr{L} \simeq \mathcal{O}_{X}(div(s))$. Setting $D = div(s) - nH$, we get that $\mathscr{L} \simeq \mathcal{O}_{X}(D)$.

For the last statement of the corollary, simply notice that $D$ is a difference of effective divisors.

\end{proof}

\subsection{A Lemma on Volume}

We now show that volumes are unaffected by tensoring with invertible sheaves.

\begin{lemma} \label{lem: tensoring_invertible_preserves_limit}

Let $X$ be a projective scheme over a field. Let $\mathscr{L}$ and $\mathscr{M}$ be invertible sheaves on $X$. Suppose that the volume of $\mathscr{L}$ exists as a limit. Then, $ \vol_{\mathscr{L}}(\mathscr{M}) $ also exists as a limit and

$$ \vol_{\mathscr{L}}(\mathscr{M}) = \vol(\mathscr{L}). $$

\end{lemma}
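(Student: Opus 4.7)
The plan is to reduce to the case where $\mathscr{M}$ differs from $\mathcal{O}_X$ by an effective Cartier divisor and then apply \cref{lem: existence_limits_SES} twice. By \cref{cor: invertible_sheaves_from_divisors}, I can write $\mathscr{M} \simeq \mathcal{O}_X(D)$ for some Cartier divisor $D$, and moreover $D = D_1 - D_2$ with $D_1, D_2$ effective Cartier divisors. So it suffices to show that for any effective Cartier divisor $E$ on $X$, if $\vol_{\mathscr{L}}(\mathcal{F})$ exists as a limit and equals $\vol(\mathscr{L})$, then the same holds after twisting $\mathcal{F}$ by $\mathcal{O}_X(\pm E)$.

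For this, I would start from the structure sequence of $E$,
$$0 \to \mathcal{O}_X(-E) \to \mathcal{O}_X \to \mathcal{O}_E \to 0,$$
which is exact because $E$ is locally cut out by a non-zero-divisor. Tensoring by the invertible sheaf $\mathcal{O}_X(D_1)$ (which is exact) yields
$$0 \to \mathcal{O}_X(D_1 - E) \to \mathcal{O}_X(D_1) \to \mathcal{O}_E(D_1) \to 0.$$
Taking $E = D_1$ in the first instance gives
$$0 \to \mathcal{O}_X \to \mathcal{O}_X(D_1) \to \mathcal{O}_{D_1}(D_1) \to 0,$$
and taking $E = D_2$ in the twisted version gives
$$0 \to \mathcal{O}_X(D_1 - D_2) \to \mathcal{O}_X(D_1) \to \mathcal{O}_{D_2}(D_1) \to 0.$$
Since $\vol_{\mathscr{L}}(\mathcal{O}_X) = \vol(\mathscr{L})$ exists as a limit by hypothesis and the quotient $\mathcal{O}_{D_1}(D_1)$ is supported on $D_1$, which has dimension $< d$, \cref{lem: existence_limits_SES} applied to the first sequence gives that $\vol_{\mathscr{L}}(\mathcal{O}_X(D_1))$ exists as a limit and equals $\vol(\mathscr{L})$. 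Applying \cref{lem: existence_limits_SES} again to the second sequence (with cokernel $\mathcal{O}_{D_2}(D_1)$ supported on $D_2$, again of dimension $< d$) yields that $\vol_{\mathscr{L}}(\mathcal{O}_X(D_1 - D_2)) = \vol_{\mathscr{L}}(\mathscr{M})$ exists as a limit and equals $\vol(\mathscr{L})$.

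The only genuine subtlety is verifying that the cokernel sheaves $\mathcal{O}_{D_i}(D_j)$ are supported in dimension strictly less than $d$. This holds because an effective Cartier divisor is locally defined by a non-zero-divisor, so its support cannot contain any $d$-dimensional associated component of $X$, and hence has dimension at most $d-1$. Once that point is settled, the rest is a direct application of the dévissage provided by \cref{lem: existence_limits_SES}, and the main (mild) obstacle is simply organizing the two short exact sequences so that the limit property propagates in the correct direction.
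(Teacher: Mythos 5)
Your proposal is correct and follows essentially the same route as the paper's own proof: reduce via \cref{cor: invertible_sheaves_from_divisors} to a difference of effective divisors, then apply \cref{lem: existence_limits_SES} to the two twisted structure sequences. The only difference is cosmetic (you also spell out why the cokernels are supported in dimension $<d$, which the paper leaves implicit).
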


\begin{proof}

By \cref{{cor: invertible_sheaves_from_divisors}}, we can assume that $\mathscr{M}=\mathcal{O}_{X}(D)$ for some Cartier divisor $D$. Let us consider first the case where $D$ is effective. We have a short exact sequence 

$$ 0 \rightarrow \mathcal{O}_{X} \left( -D \right) \rightarrow \mathcal{O}_{X} \rightarrow \mathcal{O}_{D} \rightarrow 0 $$

\noindent which, after tensoring with $ \mathcal{O}_{X} \left( D \right) $, becomes

$$ 0 \rightarrow \mathcal{O}_{X} \rightarrow \mathcal{O}_{X} \left( D \right) \rightarrow \mathcal{O}_{D} \otimes \mathcal{O}_{X} \left( D \right) \rightarrow 0.$$

\noindent Now, $\vol_{\mathscr{L}}(\mathcal{O}_{X})=\vol(\mathscr{L})$ exists as a limit by assumption. Also, the sheaf $\mathcal{O}_{D} \otimes \mathcal{O}_{X} \left( D \right)$ is supported on a closed set of dimension smaller than the dimension of $X$. Thus, by \cref{lem: existence_limits_SES}, $\vol_{\mathscr{L}}(\mathcal{O}_{X}(D))$ exists as a limit and $\vol(\mathscr{L}) = \vol_{\mathscr{L}}(\mathcal{O}_{X}(D))$.

Suppose now that $D$ is an arbitrary Cartier divisor. Thanks to \cref{cor: invertible_sheaves_from_divisors}, we can write $D \sim A-B$ where both $A$ and $B$ are effective Cartier divisors. Since $B$ is effective, we have a short exact sequence

$$ 0 \rightarrow \mathcal{O}_{X} \left(-B\right) \rightarrow \mathcal{O}_{X} \rightarrow \mathcal{O}_{B} \rightarrow 0. $$

\noindent We can tensor the sequence above with $\mathcal{O}_{X} \left( A \right)$ and get 

$$ 0 \rightarrow \mathcal{O}_{X} \left(D\right) \rightarrow \mathcal{O}_{X} \left( A \right) \rightarrow \mathcal{O}_{B} \otimes \mathcal{O}_{X} \left(A \right) \rightarrow 0. $$

\noindent Since $A$ is effective, we proved above that $\vol_{\mathscr{L}}(\mathcal{O}_{X}(A))$ exists as a limit and $\vol(\mathscr{L}) = \vol_{\mathscr{L}}(\mathcal{O}_{X}(A))$. Moreover, the sheaf $\mathcal{O}_{B} \otimes \mathcal{O}_{X} \left(A \right)$ is supported on a closed set of dimension smaller than the dimension of $X$. By \cref{lem: existence_limits_SES}, we conclude that $\vol_{\mathscr{L}}(\mathcal{O}_{X}(D))$ exists as a limit and

$$ \vol_{\mathscr{L}}(\mathcal{O}_{X}(D)) = \vol_{\mathscr{L}}(\mathcal{O}_{X}(A)) = \vol(\mathscr{L}). $$

\end{proof}

\subsection{Volumes of generically free coherent sheaves with respect to line bundles}

\begin{lemma}\label{lem: coherent_sheaves on irreducible}

Let $X$ be an irreducible projective scheme over a field and let $\mathscr{L}$ be a line bundle on $X$ whose volume exists as a limit. Let $\mathcal{F}$ be a generically free coherent sheaf. Then the volume of $\mathcal{F}$ with respect to $\mathscr{L}$ exists as a limit and

$$\\vol_{\mathscr{L}}(\mathcal{F}) = \rank(\mathcal{F})\\vol(\mathscr{L}).  $$

\end{lemma}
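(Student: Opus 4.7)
The plan is to approximate $\mathcal{F}$ by a direct sum of $r$ copies of an invertible sheaf, with error supported in dimension strictly less than $d$, and then invoke \cref{lem: tensoring_invertible_preserves_limit} together with \cref{cor: existence_limits_isomorphism_away_from_closed_sets}. Fix an ample line bundle $\mathscr{A}$ on $X$, let $\xi$ denote the unique generic point of $X$, and set $r = \rank(\mathcal{F})$. By Serre's theorem, $\mathcal{F} \otimes \mathscr{A}^n$ is globally generated for all sufficiently large $n$. The stalk $(\mathcal{F} \otimes \mathscr{A}^n)_\xi$ is a free $\mathcal{O}_{X,\xi}$-module of rank $r$, and because $\mathcal{O}_{X,\xi}$ is a local ring (with residue field $k(\xi)$), Nakayama's lemma allows us to pick global sections $s_1, \dots, s_r \in \coh^0(X, \mathcal{F} \otimes \mathscr{A}^n)$ whose images in this stalk form an $\mathcal{O}_{X,\xi}$-basis.

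These sections define a morphism $\mathcal{O}_X^{\oplus r} \to \mathcal{F} \otimes \mathscr{A}^n$ that is an isomorphism at $\xi$. Tensoring with $\mathscr{A}^{-n}$ produces a morphism $\varphi \colon (\mathscr{A}^{-n})^{\oplus r} \to \mathcal{F}$ which remains an isomorphism at $\xi$. Hence the kernel $\mathcal{K}$ and cokernel $\mathcal{C}$ of $\varphi$ are coherent sheaves whose stalks at $\xi$ vanish, so their supports are proper closed subsets of $X$. Since $X$ is irreducible of dimension $d$, these supports have dimension strictly less than $d$, and the four-term exact sequence
$$ 0 \to \mathcal{K} \to (\mathscr{A}^{-n})^{\oplus r} \to \mathcal{F} \to \mathcal{C} \to 0 $$
fits the hypotheses of \cref{cor: existence_limits_isomorphism_away_from_closed_sets}.

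To conclude, note that $h^0(X, (\mathscr{A}^{-n})^{\oplus r} \otimes \mathscr{L}^m) = r \cdot h^0(X, \mathscr{A}^{-n} \otimes \mathscr{L}^m)$ for every $m$, so $\vol_{\mathscr{L}}\bigl((\mathscr{A}^{-n})^{\oplus r}\bigr) = r \cdot \vol_{\mathscr{L}}(\mathscr{A}^{-n})$, and by \cref{lem: tensoring_invertible_preserves_limit} this quantity equals $r \cdot \vol(\mathscr{L})$ and exists as a limit. Applying \cref{cor: existence_limits_isomorphism_away_from_closed_sets} to the displayed exact sequence then yields that $\vol_{\mathscr{L}}(\mathcal{F})$ exists as a limit and equals $r \cdot \vol(\mathscr{L})$. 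The only step that could cause difficulty is the selection of the $r$ sections forming a basis at $\xi$, but this is unproblematic: $\mathcal{O}_{X,\xi}$ is always local, so Nakayama applies even in the possibly nonreduced setting of interest, where this local ring may fail to be a field.
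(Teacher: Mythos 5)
Your proposal is correct and follows essentially the same route as the paper: choose an ample $\mathscr{A}$ making $\mathcal{F}\otimes\mathscr{A}^n$ globally generated, select $r=\rank(\mathcal{F})$ sections giving a generic isomorphism from a free sheaf, twist back by $\mathscr{A}^{-n}$, and conclude via \cref{lem: tensoring_invertible_preserves_limit} and \cref{cor: existence_limits_isomorphism_away_from_closed_sets}. Your extra care with Nakayama at the (possibly nonreduced) generic stalk is a point the paper passes over silently, but the argument is the same.
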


\begin{proof}

Let $\mathscr{A}$ be an ample line bundle such that $\mathcal{F} \otimes \mathscr{A}$ is generated by global sections. Let $\xi$ be the generic point of $X$. Let $n = \rank(\mathcal{F})$. Choose $n$ global sections $s_1, \dots, s_n$ such that they form a basis of $(\mathcal{F} \otimes \mathscr{A})_\xi$ over $\mathcal{O}_{X,\xi}$. These $n$ sections induce a map $\oplus_{i=1}^n\mathcal{O}_X \rightarrow \mathcal{F} \otimes \mathcal{A}$, which is an isomorphism at the generic point. We have then a short exact sequence

$$ 0 \rightarrow \mathcal{K} \rightarrow \bigoplus_{i=1}^n \mathcal{O}_X \rightarrow \mathcal{F} \otimes \mathscr{A} \rightarrow \mathcal{C} \rightarrow 0,$$

\noindent where $\dim(\Supp(\mathcal{K})) < \dim(X)$ and $\dim(\Supp(\mathcal{C})) < \dim(X)$. Tensoring with $\mathscr{M}\coloneqq\mathscr{A}^{-1}$ we get

$$ 0 \rightarrow \mathcal{K}\otimes\mathscr{M} \rightarrow \bigoplus_{i=1}^n\mathscr{M} \rightarrow \mathcal{F} \rightarrow \mathcal{C}\otimes\mathscr{M} \rightarrow 0.$$

\noindent Notice that 

$$\\vol_\mathscr{L}\left(\bigoplus_{i=1}^{n} \mathscr{M}\right)=n\vol_{\mathscr{L}}(\mathscr{M})=\rank(\mathcal{F})\vol_{\mathscr{L}}(\mathscr{M})$$

\noindent exists as a limit by \cref{lem: tensoring_invertible_preserves_limit}. The result now follows from \cref{cor: existence_limits_isomorphism_away_from_closed_sets} applied to the second exact sequence.
\end{proof}

\section{Existence of Volumes of Linear Series on Varieties}

In this section we will state a fundamental result regarding volumes of linear series. The main theorems in this paper will depend crucially on this theorem. \cref{thm: existence_volume_linear_series_on_varieties} was proven in \cite{LazMus} for certain linear series on projective varieties over an algebraically closed field and in \cite{KavKov} for arbitrary linear series on projective varieties over algebraically closed fields. The following theorem follows from \cite[Theorem 8.1]{Cut01}.

\begin{theorem}[Cutkosky]\label{thm: existence_volume_linear_series_on_varieties}

Let $X$ be a $d$-dimensional proper variety over a field $k$. Let $L$ be a graded linear series on $X$ and let $m = m(L)$ be its index. Then,

$$ \lim \limits_{n \to \infty} \dfrac{\dim_{k}L_{mn}}{n^{d}}$$

\noindent exists.

\end{theorem}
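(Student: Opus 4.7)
The plan is to adapt Okounkov's valuation-theoretic method, in the form that Cutkosky develops in \cite{Cut01} to handle an arbitrary base field. Fix a line bundle $\mathscr{L}$ with $L \subseteq R(\mathscr{L})$ and write $K = k(X)$ for the function field of the variety $X$. First I would construct a rank-$d$ discrete valuation $\nu\colon K^{*} \to \mathbb{Z}^{d}$, with $\mathbb{Z}^{d}$ given the lexicographic order. When $k$ is algebraically closed this is classical: pick a smooth closed point $p \in X$ and a regular system of parameters $u_{1}, \dots, u_{d}$ at $p$, and read off the successive orders of vanishing along the flag $\{u_{1} = \cdots = u_{i} = 0\}$. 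Over an arbitrary field one first replaces $X$ by a birational model containing a sufficiently regular closed point with well-behaved residue field, then builds an analogous flag. Trivializing $\mathscr{L}$ near $p$ identifies each graded piece $L_{n}$ with a $k$-subspace of $K$, and the filtration by $\nu$-value on this subspace has one-dimensional associated graded pieces over $k$; hence the induced map $\nu\colon L_{n} \setminus \{0\} \to \mathbb{Z}^{d}$ has image of cardinality exactly $\dim_{k} L_{n}$.

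Next I would assemble the graded semigroup
$$
\Gamma(L) \;=\; \bigcup_{n \geq 0} \{n\} \times \nu\bigl(L_{n} \setminus \{0\}\bigr) \;\subset\; \mathbb{N} \times \mathbb{Z}^{d}
$$
and invoke the Khovanskii-type counting theorem for graded semigroups, in the generality established in \cite{Cut01}. That result asserts that, once one passes to the sub-semigroup indexed by those $n$ for which $L_{n} \neq 0$, the slice counts grow polynomially of degree $d$ with leading coefficient equal to the Euclidean $d$-volume of the associated Newton--Okounkov body $\Delta(L) \subset \mathbb{R}^{d}$. By the definition of $m = m(L)$, the support $\{n : L_{n} \neq 0\}$ generates a subgroup of $\mathbb{Z}$ of index exactly $m$, so restricting to multiples of $m$ yields a semigroup whose projection to $\mathbb{N}$ is all of $\mathbb{N}$, which is precisely the hypothesis needed for the limit theorem. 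Combining these ingredients gives
$$
\lim_{n \to \infty} \frac{\dim_{k} L_{mn}}{n^{d}} \;=\; d!\cdot\operatorname{vol}_{\mathbb{R}^{d}}\bigl(\Delta(L)\bigr),
$$
so in particular the limit exists.

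The main technical obstacle is the construction of the $\mathbb{Z}^{d}$-valued valuation over a non-algebraically-closed field. Over $\bar{k}$ one uses any smooth closed rational point, but for general $k$ one must either descend carefully from the algebraic closure while controlling the behavior of $\dim_{k}L_{n}$ under base change, or produce a sufficiently regular flag on a birational model of $X$ directly; this is the part of \cite[Theorem 8.1]{Cut01} that requires the most care. A secondary point is to check that the subgroup of $\mathbb{Z}^{d+1}$ generated by $\Gamma(L)$ attains full rank $d$ in the fiber direction; if it does not, then $\dim_{k}L_{mn}$ grows sub-polynomially of degree $d$, the limit exists trivially and equals zero, and one reduces to a linear series on a lower-dimensional subvariety by induction on $d$.
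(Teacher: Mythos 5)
First, a remark on the comparison itself: the paper does not prove this theorem, it quotes it from \cite[Theorem 8.1]{Cut01}, so the only thing to measure your sketch against is the strategy of that reference --- which is indeed the Okounkov semigroup method you describe. Your outline of the reduction (value semigroup $\Gamma(L)\subset\mathbb{N}\times\mathbb{Z}^d$, Khovanskii-type counting theorem, the index-$m$ normalization so that the support of the semigroup generates $\mathbb{Z}$) matches the cited proof in broad strokes.

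There is, however, a genuine gap at the central counting step, and it sits exactly at the point you yourself flag as delicate. You assert that after choosing a $\mathbb{Z}^d$-valued valuation $\nu$ of $k(X)$ the associated graded pieces of the $\nu$-filtration on $L_n$ are one-dimensional over $k$, so that $\#\nu(L_n\setminus\{0\})=\dim_k L_n$. Over a non-algebraically-closed field this is false in general: a valuation of this kind has residue field a finite extension $k'$ of $k$, the graded pieces are only bounded in $k$-dimension by $[k':k]$, and there need not exist any closed point (on any birational model) with residue field equal to $k$ --- a smooth conic over $\mathbb{R}$ with no real points already rules this out. Neither of your proposed remedies closes the gap: extending scalars to $\bar{k}$ can make $X_{\bar{k}}$ reducible or nonreduced, which is precisely the pathology whose absence the semigroup method requires (and which this paper is about), and ``a sufficiently regular flag with well-behaved residue field'' simply may not exist. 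The device used in \cite{Cut01} is different: one fixes a valuation whose residue field is merely finite over $k$ and augments the value semigroup by an extra coordinate recording the $k$-dimension of each graded piece; an inequality of the form $\dim_k \mathrm{gr}_{\lambda+\mu}(L_aL_b)\ge \dim_k\mathrm{gr}_{\lambda}L_a+\dim_k\mathrm{gr}_{\mu}L_b-1$ shows the augmented set is again a subsemigroup (now of $\mathbb{Z}^{d+2}$) whose slices have cardinality exactly $\dim_k L_n$, and the limit theorem is applied to that semigroup. Without this or an equivalent mechanism your argument yields only two-sided estimates on $\dim_k L_{mn}/n^d$, not existence of the limit. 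The remaining points are minor: the support $\{n: L_{mn}\neq 0\}$ contains all sufficiently large integers rather than all of $\mathbb{N}$ (which is what the counting theorem actually needs), and in the degenerate case where $\Gamma(L)$ does not generate a full-rank subgroup the counting theorem gives growth of order strictly less than $n^d$ directly, so the limit is $0$ with no need for induction on $d$.
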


Let $\mathscr{L}$ be a line bundle on a proper variety. It can be shown that if $\vol(\mathscr{L})>0$, then the index of $\mathscr{L}$ is equal to one (the index of a line bundle is defined in \cref{sec: Notation}). We can thus take $L=R(\mathscr{L})$ in \cref{thm: existence_volume_linear_series_on_varieties} to show that the volume of any line bundle on a proper variety exists as a limit.

\section{Nilradicals and Volumes} \label{sec: Nilradicals_and_Volumes}

The following proposition refines \cite[Proposition 4.1]{CN}. The first part of the proof is as in \cite{CN}, but we repeat it here for the reader's convenience.

\begin{proposition} \label{prop: all_depends_on _volume_of_nilradical}

Let $X$ be an irreducible projective scheme over a field $k$. Let $\mathcal{N}$ be the nilradical of $X$. Let $\mathscr{L}$ be an invertible sheaf on $X$. Suppose that $\vol_{\mathscr{L}}(\mathcal{N})$ exists as a limit. Then, $\vol(\mathscr{L})$ exists as a limit as well and we have the formula

$$ \vol(\mathscr{L}) = \vol(L) + \vol_{\mathscr{L}}(\mathcal{N}),$$

\noindent where $L$ is the linear series obtained by restricting the sections of $\mathscr{L}$ to $X_{red}$. Moreover, either $m(L)=1$ or $\vol(L)=0$.

\end{proposition}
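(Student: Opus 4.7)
The plan is to first extract a dimension identity from the restriction sequence, then derive the formula at the level of $\limsup$s, and finally use the ``moreover'' dichotomy together with \cref{thm: existence_volume_linear_series_on_varieties} to promote the $\limsup$ of $h^{0}(X,\mathscr{L}^n)/(n^d/d!)$ to an honest limit.

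First I tensor the canonical sequence $0 \to \mathcal{N} \to \mathcal{O}_X \to \mathcal{O}_{X_{red}} \to 0$ with the invertible sheaf $\mathscr{L}^n$ (keeping exactness) and take global sections. Since $L_n$ is by definition the image of $\coh^{0}(X, \mathscr{L}^n)$ in $\coh^{0}(X_{red}, (\mathscr{L}|_{X_{red}})^n)$, I obtain the short exact sequence
$$
0 \to \coh^{0}(X, \mathcal{N} \otimes \mathscr{L}^n) \to \coh^{0}(X, \mathscr{L}^n) \to L_n \to 0,
$$
whence $h^{0}(X, \mathscr{L}^n) = \dim_k L_n + h^{0}(X, \mathcal{N}\otimes \mathscr{L}^n)$. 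Dividing by $n^d/d!$ and using that $h^{0}(X,\mathcal{N}\otimes\mathscr{L}^n)/(n^d/d!)$ converges to $\vol_{\mathscr{L}}(\mathcal{N})$ by hypothesis, the identity $\limsup(a_n+c_n) = \limsup a_n + c$ (valid whenever $c_n \to c$) immediately yields the formula $\vol(\mathscr{L}) = \vol(L) + \vol_{\mathscr{L}}(\mathcal{N})$, regardless of whether either volume is a limit.

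Granting the ``moreover'' dichotomy for the moment, the existence of $\vol(\mathscr{L})$ as a limit follows quickly: if $m(L) = 1$ then \cref{thm: existence_volume_linear_series_on_varieties} applied to $L$ on the variety $X_{red}$ gives that $\lim \dim_k L_n / (n^d/d!)$ exists; if instead $\vol(L) = 0$, then nonnegativity forces the same ratio to tend to $0$. In either case the first summand of the above identity converges, and since the second does by hypothesis, so does their sum, completing the proof.

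The main obstacle is the ``moreover'' statement, i.e.\ ruling out the configuration $m:=m(L)\geq 2$ with $\vol(L)>0$. The difficulty is genuine: for an arbitrary graded $k$-subalgebra of $R(\mathscr{L}|_{X_{red}})$ one can easily have index $\geq 2$ while $\vol>0$ (for instance the subring $k[x^2,xy,y^2]\subset k[x,y]$ on $\mathbb{P}^1$), so the argument must exploit that $L$ arises from the restriction map of a line bundle defined on all of $X$. My plan is to argue by contradiction. Under $\vol(L)>0$, the restriction $\mathscr{L}|_{X_{red}}$ is big on the variety $X_{red}$, so by the index-one remark following \cref{thm: existence_volume_linear_series_on_varieties} combined with a Sylvester--Frobenius argument, $\coh^{0}(X_{red}, (\mathscr{L}|_{X_{red}})^n)\neq 0$ for every sufficiently large $n$. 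I then combine a restriction-nontrivial section in some degree $mk_0$ (supplied by $\vol(L)>0$) with non-zero-divisor sections of ample twists produced by \cref{lem: existence_nzd}, and exploit that the irreducibility of $X$ makes the generic stalk $\mathcal{O}_{X,\xi}$ local Artinian (so that ``restricting nontrivially to $X_{red}$'' and ``becoming a unit at $\xi$'' are the same condition), in order to transport restriction-nontrivial sections across degrees and ultimately to produce them in two coprime degrees, contradicting $m \geq 2$. This transport step is the most delicate piece and is where the specific structure of $L$, as opposed to that of a general graded subalgebra, must be used essentially.
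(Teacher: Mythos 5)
Your first two paragraphs are correct and coincide with the paper's opening step: left-exactness of global sections gives $h^{0}(X,\mathscr{L}^{n}) = h^{0}(X,\mathcal{N}\otimes\mathscr{L}^{n}) + \dim_{k}L_{n}$, the formula follows at the level of $\limsup$s, and once the dichotomy is known, either \cref{thm: existence_volume_linear_series_on_varieties} (with $m(L)=1$) or nonnegativity makes $\dim_{k}L_{n}/(n^{d}/d!)$ converge, so the sum converges. The problem is that the dichotomy itself --- the entire substance of the proposition --- is only sketched, and the sketch as given does not close.

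Concretely: the semigroup $S=\{n: L_{n}\neq 0\}$ is closed under addition (a product of sections restricting nontrivially to the integral scheme $X_{red}$ again restricts nontrivially), so to contradict $m(L)\geq 2$ you must genuinely enlarge the group $S$ generates. Multiplying a restriction-nontrivial $\sigma\in\coh^{0}(X,\mathscr{L}^{n_{0}})$ by the non-zero-divisors $\alpha\in\coh^{0}(X,\mathscr{A})$ and $\beta\in\coh^{0}(X,\mathscr{A}\otimes\mathscr{L})$ from \cref{lem: existence_nzd} only lands you in ample twists of powers of $\mathscr{L}$, never back in $L$; and bigness of $\mathscr{L}|_{X_{red}}$ gives nonvanishing of the \emph{complete} series $\coh^{0}(X_{red},(\mathscr{L}|_{X_{red}})^{n})$ for large $n$, which says nothing about which sections lie in the image $L_{n}$ of restriction. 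The only transport mechanism available is a restriction-nontrivial section $\gamma$ of a \emph{negative} twist $\mathscr{A}^{-1}\otimes\mathscr{L}^{n_{0}}$, since then $\alpha\gamma\in L_{n_{0}}$ and $\beta\gamma\in L_{n_{0}+1}$ are nonzero and force $m(L)=1$. Producing such a $\gamma$ from $\vol(L)>0$ is exactly the missing content, and in fact such a $\gamma$ need not exist: the paper does not argue by contradiction at all, but cases on whether some $\coh^{0}(X,\mathscr{A}^{-1}\otimes\mathscr{L}^{n})$ restricts nontrivially. When it does, $m(L)=1$ as above. When it never does, $\coh^{0}(X,\mathscr{A}^{-1}\otimes\mathscr{L}^{n})=\coh^{0}(X,\mathcal{N}\otimes\mathscr{A}^{-1}\otimes\mathscr{L}^{n})$ for all $n$, and combining the sequence $0\to\mathscr{A}^{-1}\to\mathcal{O}_{X}\to\mathcal{O}_{H}\to 0$, its tensor with $\mathcal{N}$ (still exact because $\alpha$ is a non-zero-divisor), and \cref{lem: existence_limits_SES} yields $\vol(\mathscr{L})=\vol_{\mathscr{L}}(\mathcal{N})$ as a limit, whence $\vol(L)=0$. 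Note that this branch is where the hypothesis that $\vol_{\mathscr{L}}(\mathcal{N})$ exists as a limit enters the paper's proof of the dichotomy, whereas your sketch never uses that hypothesis --- a symptom that the ``$\vol(L)=0$'' alternative, which your contradiction framing cannot reach, is missing from your argument.
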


\begin{proof}

Let $d=\dim(X)$. Fix $n \in \mathbb{N}$. The short exact sequence 

$$ 0 \rightarrow \mathcal{N} \rightarrow \mathcal{O}_{X} \rightarrow \mathcal{O}_{X_{red}} \rightarrow 0 $$

\noindent can be tensored with $\mathscr{L}^n$ to get

$$ 0 \rightarrow \mathcal{N}\otimes \mathscr{L}^{n} \rightarrow \mathscr{L}^{n} \rightarrow \left(\mathscr{L}|_{X_{red}}\right)^{n} \rightarrow 0. $$

\noindent Taking global sections and using the additivity of dimension we get that

\begin{equation}\label{eq: basic_SES}
\dfrac{h^{0}(X, \mathscr{L}^{n})}{n^d/d!} = \dfrac{h^{0}(X, \mathcal{N} \otimes \mathscr{L}^{n})}{n^d/d!} + \dfrac{\dim_{k}L_n}{n^d/d!}, 
\end{equation}

\noindent where $L_n=image\left(\coh^{0}(X,\mathscr{L}^{n}) \rightarrow \coh^{0}(X_{red}, \left(\mathscr{L}|_{X_{red}}\right)^{n})\right)$. The $k$-algebra $L= \oplus_n L_n$ is a graded linear series associated to $\mathscr{L}|_{X_{red}}$. Fix an ample divisor $\mathscr{A}$ such that there exist sections $\alpha \in \coh^{0}(X, \mathscr{A})$ and $\beta \in \coh^{0}(X, \mathscr{A} \otimes \mathscr{L})$ that are non-zero-divisors. The fact that this is possible is a consequence of \cref{lem: existence_nzd}. We have in particular that the restrictions $\alpha|_{X_{red}}$ and $\beta|_{X_{red}}$ to $X_{red}$ are not zero. This is the case since $\alpha$ and $\beta$ are non-zero-divisors and $X_{red}$ is an associated subvariety of $X$. We consider two cases.

\noindent \textit{Case 1}. Suppose that there exists $n_0 > 0$ such that the restriction map

$$ \coh^0(X, \mathscr{A}^{-1} \otimes \mathscr{L}^{n_0}) \rightarrow \coh^0(X_{red}, \mathscr{A}^{-1} \otimes \left(\mathscr{L}|_{X_{red}} \right)^{n_0} ) $$

\noindent is not zero. Choose $\gamma \in \coh^0(X, \mathscr{A}^{-1} \otimes \mathscr{L}^{n_0})$ such that $\gamma|_{X_{red}} \neq 0$. Since $X_{red}$ is a variety, 

$$0 \neq \alpha|_{X_{red}} \otimes \gamma|_{X_{red}} \in L_{n_0}$$

\noindent and 

$$0 \neq \beta|_{X_{red}} \otimes \gamma|_{X_{red}} \in L_{n_0+1}.$$ 

\noindent It follows that the index of $L$ is equal to 1. By \cref{thm: existence_volume_linear_series_on_varieties}, $\vol(L)$ exists as a limit. Recall that $\vol_{\mathscr{L}}(\mathcal{N})$ exists as a limit by assumption. Taking limits in \eqref{eq: basic_SES} one gets that $\vol(\mathscr{L})$ exists as a limit and

$$ \vol(\mathscr{L}) = \vol(L) + \vol_{\mathscr{L}}(\mathcal{N}).$$

\noindent \textit{Case 2}. Suppose now that for all $n>0$ the restriction map

$$ \coh^0(X, \mathscr{A}^{-1} \otimes \mathscr{L}^{n}) \rightarrow  \coh^0(X_{red},  \mathscr{A}^{-1} \otimes \left(\mathscr{L}|_{X_{red}} \right)^{n} ) $$

\noindent is the zero map. We begin by showing that $\vol_{\mathscr{L}}(\mathcal{N} \otimes \mathscr{A}^{-1})$ exists as a limit and 

\begin{equation} \label{eq: volume_with_equal_volume_without}
\vol_{\mathscr{L}}(\mathcal{N} \otimes \mathscr{A}^{-1}) = \vol_{\mathscr{L}}(\mathcal{N}).
\end{equation}

\noindent The global section $\alpha$ of $\mathscr{A}$ induces a short exact sequence

\begin{equation}\label{eq: hyperplane_section}
0 \rightarrow \mathscr{A}^{-1} \rightarrow \mathcal{O}_{X} \rightarrow \mathcal{O}_{H} \rightarrow 0,    
\end{equation}

\noindent where $H \coloneqq div(\alpha)$ is a closed subscheme of dimension smaller than the dimension of $X$. We can tensor this exact sequence with $\mathcal{N}$ to get 

$$ 0 \rightarrow \mathcal{K} \rightarrow \mathscr{A}^{-1} \otimes \mathcal{N} \rightarrow \mathcal{N} \rightarrow \mathcal{O}_{H} \otimes \mathcal{N} \rightarrow 0. $$

\noindent We will show that $\mathcal{K}=0$. Let $p \in X$ and let $f \in \mathcal{O}_{X,p}$ be a local equation for $\alpha$ at $p$. We have that $f$ is a non-zero-divisor on $\mathcal{O}_{X,p}$. Moreover, since $\mathcal{N}_p \subset \mathcal{O}_{X,p}$, then $f$ is a non-zero-divisor on $\mathcal{N}_{p}$. Thus, $(\mathscr{A}^{-1} \otimes \mathcal{N})_p \rightarrow \mathcal{N}_p$ is injective and $\mathcal{K}_p=0$. Since $p$ was arbitrary, $\mathcal{K}=0$.

Therefore, we have a short exact sequence 

$$ 0 \rightarrow \mathscr{A}^{-1} \otimes \mathcal{N} \rightarrow \mathcal{N} \rightarrow \mathcal{O}_{H} \otimes \mathcal{N} \rightarrow 0. $$

\noindent Since $ \vol_{\mathscr{L}}(\mathcal{N}) $ exists as a limit by assumption, we can apply \cref{lem: existence_limits_SES} to conclude that $\vol_{\mathscr{L}}(\mathcal{N} \otimes \mathscr{A}^{-1})$ exists as a limit and \eqref{eq: volume_with_equal_volume_without} holds. 

It follows from the assumption of \textit{Case 2} that 

$$ \coh^{0}(X,\mathcal{N}\otimes\mathscr{A}^{-1}\otimes \mathscr{L}^{n}) =  \coh^0(X, \mathscr{A}^{-1} \otimes \mathscr{L}^{n})$$

\noindent for all $n>0$. This shows the $\vol_{\mathscr{L}}(\mathscr{A}^{-1})$ exists as a limit and

\begin{equation}\label{eq: with_without_nilradical}
\vol_{\mathscr{L}}(\mathscr{A}^{-1})= \vol_{\mathscr{L}}(\mathcal{N} \otimes \mathscr{A}^{-1}).
\end{equation}

\noindent Finally, apply \cref{lem: existence_limits_SES} to the short exact sequence \eqref{eq: hyperplane_section} to get that $\vol(\mathscr{L})$ exists as a limit and 

\begin{equation} \label{eq: volume_of_L}
\vol(\mathscr{L}) = \vol_{\mathscr{L}}(\mathscr{A}^{-1}).
\end{equation}

\noindent Combining \eqref{eq: volume_with_equal_volume_without}, \eqref{eq: with_without_nilradical}, and \eqref{eq: volume_of_L}, we get the relation

$$  \vol(\mathscr{L}) = \vol_{\mathscr{L}}(\mathcal{N}). $$

\noindent Looking back at \eqref{eq: basic_SES}, this shows that $\vol(L)=0$.

\end{proof}

\section{Proof of \cref{thm: existence_limits_codimension1}}

As a first step towards proving \cref{thm: existence_limits_codimension1}, we address the case where $X$ is normal and the subscheme $Y$ has an invertible ideal sheaf. So, our first goal is to prove the following proposition.

\begin{proposition} \label{prop: existence_of_limits_on_invertible_ideal_sheaf_on_normal_varieties}

Let $X$ be a normal projective variety over an arbitrary field. Let $\mathcal{I}$ be a locally principal ideal sheaf with associated closed subscheme $Y$. Then, the volume of any invertible sheaf on $Y$ exists as a limit.

\end{proposition}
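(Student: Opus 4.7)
The plan is to combine a reduction to the irreducible case with an induction on the generic multiplicity, using \cref{prop: all_depends_on _volume_of_nilradical} as the main engine. The key geometric input is that, since $X$ is normal and hence regular in codimension one, the local ring of $X$ at the generic point of any prime Weil divisor is a DVR.

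First, I would reduce to the case that $Y$ is irreducible. Since $X$ is normal, $\mathcal{O}_X$ satisfies $(S_2)$, and because $\mathcal{I}$ is locally generated by a non-zero-divisor, $\mathcal{O}_Y$ satisfies $(S_1)$, so $Y$ has no embedded components. Writing $\mathcal{I} = \bigcap_{i=1}^{s} \mathcal{I}^{(i)}$ for the primary decomposition and setting $Y_i = V(\mathcal{I}^{(i)})$, one obtains an exact sequence
$$ 0 \rightarrow \mathcal{O}_Y \rightarrow \bigoplus_{i=1}^{s} \mathcal{O}_{Y_i} \rightarrow \mathcal{C} \rightarrow 0 $$
in which $\mathcal{C}$ is supported on $\bigcup_{i \neq j}(D_i \cap D_j)$, a closed subset of dimension $<d$. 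Tensoring with $\mathscr{L}^n$ and applying \cref{lem: existence_limits_SES} reduces the problem to showing that $\vol(\mathscr{L}|_{Y_i})$ exists as a limit for each $i$.

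To handle each such $Y_i$, I would prove by induction on $a\geq 1$ the following general claim: if $W$ is an irreducible projective scheme such that $W_{red}$ is a projective variety, $\mathcal{O}_{W,\eta} \simeq R/(\pi^{a})$ for some DVR $R$ with uniformizer $\pi$, and $\mathcal{N}(W)^{a} = 0$ globally, then $\vol(\mathscr{M})$ exists as a limit for every invertible sheaf $\mathscr{M}$ on $W$. The base case $a=1$ gives $W$ a projective variety, so the claim follows from \cref{thm: existence_volume_linear_series_on_varieties} and the remark after it. For the inductive step, set $W' = V(\mathcal{N}(W)^{a-1})$; a stalk computation shows that $W'$ satisfies the same hypotheses with parameter $a-1$, so by induction $\vol(\mathscr{M}|_{W'})$ exists as a limit for every invertible $\mathscr{M}|_{W'}$. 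Because $\mathcal{N}(W) \cdot \mathcal{N}(W)^{a-1} = 0$, the sheaf $\mathcal{N}(W)$ is naturally an $\mathcal{O}_{W'}$-module, and at the generic point $\mathcal{N}(W)_\eta = (\pi)/(\pi^{a}) \simeq \mathcal{O}_{W',\eta}$, so $\mathcal{N}(W)$ is generically free of rank one on $W'$. By \cref{lem: coherent_sheaves on irreducible}, $\vol_{\mathscr{M}|_{W'}}(\mathcal{N}(W)) = \vol(\mathscr{M}|_{W'})$ exists as a limit; by $\mathcal{O}_{W'}$-linearity this agrees with $\vol_{\mathscr{M}}(\mathcal{N}(W))$, and \cref{prop: all_depends_on _volume_of_nilradical} then yields that $\vol(\mathscr{M})$ exists as a limit.

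The part I expect to require the most care is verifying that each $Y_i$ from the first step actually satisfies the hypotheses of the inductive claim, in particular the global vanishing $\mathcal{N}(Y_i)^{a_i} = 0$. This amounts to the inclusion $\mathcal{I}_{D_i}^{a_i} \subset \mathcal{I}^{(i)}$, which I would deduce from normality by identifying $\mathcal{I}^{(i)}$ with the Weil divisorial sheaf $\mathcal{O}_X(-a_i D_i)$: a product of $a_i$ sections of $\mathcal{I}_{D_i}$ has order $\geq a_i$ along $D_i$ and nonnegative order along every other prime divisor, so by $(S_2)$ it lies in $\mathcal{O}_X(-a_i D_i)$. This global vanishing is precisely what makes $\mathcal{N}(W)$ an $\mathcal{O}_{W'}$-module at each inductive stage and lets \cref{lem: coherent_sheaves on irreducible} transfer the volume computation onto the smaller scheme $W'$; this is what bypasses the obstacle that the successive quotients of $\mathcal{N}(W) \supset \mathcal{N}(W)^{2} \supset \cdots$ all have $d$-dimensional support and so cannot be combined directly using \cref{lem: existence_limits_SES}.
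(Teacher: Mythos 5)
Your proposal is correct and takes essentially the same route as the paper: decompose $Y$ into its primary components (which the paper identifies as symbolic powers $\mathcal{P}_i^{(n_i)}$ of height-one primes via normality and \cref{lem: principal_ideal_normal_ring}), then induct on the generic multiplicity, using that the nilradical is generically free of rank one as a module over the next scheme in the filtration, together with \cref{lem: coherent_sheaves on irreducible} and \cref{prop: all_depends_on _volume_of_nilradical}. The only difference is cosmetic: your intermediate schemes are cut out by powers of the nilradical, $V(\mathcal{N}(W)^{a-1})$, whereas the paper uses the symbolic powers $V(\mathcal{P}^{(a-1)})$; both work identically since the argument only uses the generic stalk and the vanishing $\mathcal{N}(W)\cdot\mathcal{N}(W)^{a-1}=0$.
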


We will begin by proving a special case of this proposition, but before we need some preliminaries. Let $X$ be a Noetherian scheme. Let $\mathcal{P} \subset \mathcal{O}_{X}$ be a prime ideal sheaf, that is, $\mathcal{P}$ is an ideal sheaf whose associated closed subscheme $Y$ is integral. Let $\xi$ be the generic point of $Y$. We define the $n$\textsuperscript{th} symbolic power of $\mathcal{P}$, denoted $\mathcal{P}^{(n)}$, to be the ideal sheaf given as the kernel of the natural map 

$$  \mathcal{O}_{X} \rightarrow \dfrac{\mathcal{O}_{X, \xi}}{m_{\xi}^{n}}. $$

\noindent We agree that the homomorphism $\mathcal{O}_{X}(U) \rightarrow \mathcal{O}_{X, \xi}/m_{\xi}^{n}$ is the zero map whenever $\xi \notin U$. Thus, if $U \subset X$ is an open set not containing $\xi$, we have that $\mathcal{P}^{(n)}(U)=\mathcal{O}_{X}(U)$. On the other hand, for an affine open subset $U \subset X$ containing $\xi$, $\mathcal{P}^{(n)}(U)$ is equal to the $n$\textsuperscript{th} symbolic power of the prime ideal $\mathcal{P}(U)$ in the ring $\mathcal{O}_{X}(U)$.

\begin{proposition} \label{prop: existence_of_limits_on_symbolic_power_of_integral_subschme_of_projective_scheme}

Let $X$ be a normal projective variety over an arbitrary field. Let $\mathcal{P}$ be a prime ideal sheaf with associated integral subscheme $Y$. Assume that $Y$ has codimension one. Let $m$ be a positive integer. Denote by $Y^{(m)}$ the subscheme associated to the ideal sheaf $\mathcal{P}^{(m)}$. Then, the volume of any invertible sheaf on $Y^{(m)}$ exists as a limit. 

\end{proposition}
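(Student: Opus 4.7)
The plan is to argue by induction on $m$, using \cref{prop: all_depends_on _volume_of_nilradical} at each step to reduce the existence of $\vol(\mathscr{L})$ on $Y^{(m)}$ to the existence of $\vol_{\mathscr{L}}(\mathcal{N})$, where $\mathcal{N}=\mathcal{P}/\mathcal{P}^{(m)}$ is the nilradical of $Y^{(m)}$. First I would observe that $Y^{(m)}$ is an irreducible projective scheme whose reduction is $Y$, since $\sqrt{\mathcal{P}^{(m)}}=\mathcal{P}$. The base case $m=1$ follows from \cref{thm: existence_volume_linear_series_on_varieties} applied to the complete linear series of $\mathscr{L}$ on the variety $Y^{(1)}=Y$. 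For the inductive step, the inductive hypothesis provides that $\vol(\mathscr{L}|_{Y^{(m-1)}})$ exists as a limit, and \cref{prop: all_depends_on _volume_of_nilradical} reduces the task to showing that $\vol_{\mathscr{L}}(\mathcal{N})$ exists as a limit.

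The crucial structural observation I would exploit is that $\mathcal{N}$ carries a natural structure of generically free coherent $\mathcal{O}_{Y^{(m-1)}}$-module of rank one. Checking at the stalk of the generic point $\xi$ of $Y$ (where $\mathcal{P}^{(n)}_{\xi}=m_{\xi}^{n}$ by definition), one verifies the general product law $\mathcal{P}^{(a)}\cdot \mathcal{P}^{(b)}\subset \mathcal{P}^{(a+b)}$ and, in particular, $\mathcal{P}^{(m-1)}\cdot \mathcal{P}\subset \mathcal{P}^{(m)}$, so $\mathcal{P}^{(m-1)}$ annihilates $\mathcal{N}$; hence $\mathcal{N}$ is naturally an $\mathcal{O}_{Y^{(m-1)}}$-module. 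Normality of $X$ guarantees that $\mathcal{O}_{X,\xi}$ is a DVR, so fixing a uniformizer $t$, multiplication by $t$ defines an isomorphism of $\mathcal{O}_{Y^{(m-1)},\xi}$-modules $\mathcal{O}_{X,\xi}/(t^{m-1})\xrightarrow{\sim}(t)/(t^{m})=\mathcal{N}_{\xi}$, which exhibits $\mathcal{N}_{\xi}$ as free of rank one.

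With this identification, the proof finishes quickly: \cref{lem: coherent_sheaves on irreducible} applied to the irreducible projective scheme $Y^{(m-1)}$ and to $\mathscr{L}|_{Y^{(m-1)}}$ (whose volume exists as a limit by induction) yields $\vol_{\mathscr{L}|_{Y^{(m-1)}}}(\mathcal{N})=\vol(\mathscr{L}|_{Y^{(m-1)}})$ existing as a limit. Since $\mathcal{N}$ is an $\mathcal{O}_{Y^{(m-1)}}$-module, the tensor products $\mathcal{N}\otimes \mathscr{L}^{n}$ and their cohomology agree whether computed on $Y^{(m)}$ or on $Y^{(m-1)}$, and $\dim Y^{(m)}=\dim Y^{(m-1)}=d$, so $\vol_{\mathscr{L}}(\mathcal{N})$ exists as a limit, and \cref{prop: all_depends_on _volume_of_nilradical} completes the induction. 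The main obstacle lies in the structural observation itself: $\mathcal{N}$ is \emph{not} generically free over $\mathcal{O}_{Y^{(m)}}$, and identifying it as free of rank one over the smaller thickening $\mathcal{O}_{Y^{(m-1)}}$ is exactly what uses both the product law for symbolic powers and the normality of $X$; this is the point at which the normality hypothesis is essential.
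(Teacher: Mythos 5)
Your proposal is correct and follows essentially the same route as the paper's proof: induction on $m$ with base case from \cref{thm: existence_volume_linear_series_on_varieties}, reduction via \cref{prop: all_depends_on _volume_of_nilradical} to the existence of $\vol_{\mathscr{L}}(\mathcal{N})$, the identification of $\mathcal{N}=\mathcal{P}/\mathcal{P}^{(m)}$ as a generically free rank-one $\mathcal{O}_{Y^{(m-1)}}$-module using normality at the codimension-one generic point, and the application of \cref{lem: coherent_sheaves on irreducible} together with the equality of global sections computed on $Y^{(m)}$ and $Y^{(m-1)}$. The only cosmetic difference is that you phrase the generic stalk computation via a uniformizer of the DVR $\mathcal{O}_{X,\xi}$ while the paper writes $\mathcal{P}_{\xi}=(f)$ and $\mathcal{N}_{\xi}=(f)/(f^m)$, which is the same argument.
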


\begin{proof}

We argue by induction on $m$, where the case $m=1$ follows from \cref{thm: existence_volume_linear_series_on_varieties} (\cite[Proposition 8.1]{Cut01}) since $Y=Y^{(1)}$ is a projective variety. Fix some $m >1$. Let $\mathcal{N}$  be the nilradical of $Y^{(m)}$ and let $\mathscr{L}$ be an invertible sheaf on $Y^{(m)}$. By \cref{prop: all_depends_on _volume_of_nilradical}, since $Y^{(m)}$ is irreducible, it is enough to prove that $\vol_{\mathscr{L}}(\mathcal{N})$ exists as a limit and we now proceed to do so.

We first show that $\mathcal{N}$ is generically free of rank one as an $\mathcal{O}_{Y^{(m-1)}}$-module. Notice that $\mathcal{N}=\mathcal{P}/\mathcal{P}^{(m)}$ is a $\mathcal{O}_{Y^{(m-1)}}=\mathcal{O}_X/\mathcal{P}^{(m-1)}$-module, since $\mathcal{P} \cdot \mathcal{P}^{(m-1)} \subset \mathcal{P}^{(m)}$. Let $\xi$ be the generic point of $Y$. Since $X$ is normal and $Y$ has codimension one, $X$ must be nonsingular at $\xi$. We then have that $\mathcal{P}_{\xi} = (f)$ for some $f \in \mathcal{O}_{X,\xi}$. Since $\mathcal{P}_{\xi}$ is principal, $\mathcal{P}_{\xi}^{(m)}=\mathcal{P}_{\xi}^{m}$ and then $\mathcal{N}_{\xi} = \mathcal{P}_{\xi}/\mathcal{P}^{(m)}_{\xi} = (f)/(f^m)$ is free of rank one as a $\mathcal{O}_{Y^{(m-1)},\xi}$-module.

By induction, the volume of $\mathscr{L}|_{Y^{(m-1)}}$ exists as a limit. By \cref{lem: coherent_sheaves on irreducible}, we have that 

$$\vol_{\mathscr{L}|_{Y^{(m-1)}}}(\mathcal{N}) = \vol(\mathscr{L}|_{Y^{(m-1)}})$$

\noindent and these volumes exist as limits. Now, for any $n \in \mathbb{N}$, since $\mathcal{N}$ is both an $\mathcal{O}_{Y^{(m)}}$-module and an $\mathcal{O}_{Y^{(m-1)}}$-module, we have that 

$$ \coh^{0}( Y^{(m)}, \mathcal{N} \otimes \mathscr{L}^{n}) = \coh^{0}(Y^{(m-1)}, \mathcal{N} \otimes \left( \mathscr{L}|_{Y^{(m-1)}}\right)^n ) $$

\noindent and hence the limit

\begin{equation*}
    \begin{split}
    \vol_{\mathscr{L}}(\mathcal{N})&= \lim \limits_{n \to \infty} \dfrac{h^{0}\left( Y^{(m)}, \mathcal{N} \otimes \mathscr{L}^{n} \right)}{n^{d}/d!} = \lim \limits_{n \to \infty} \dfrac{h^{0}\left( Y^{(m-1)}, \mathcal{N} \otimes (\mathscr{L}|_{Y^{(m-1)}})^{n} \right)}{n^{d}/d!}\\
    &= \vol_{\mathscr{L}|_{Y^{(m-1)}}}\left(\mathcal{N}\right)
\end{split}
\end{equation*} 

\noindent exists, where $d=\dim(Y^{(m)}) = \dim(Y^{(m-1)})$. By \cref{prop: all_depends_on _volume_of_nilradical}, the proof is complete. 

\end{proof}

\begin{remark}

Notice that we have also shown in the proof that 

$$\vol_{\mathscr{L}}(\mathcal{N})= \vol(\mathscr{L}|_{Y^{(m-1)}}).$$

\noindent By \cref{prop: all_depends_on _volume_of_nilradical}, we obtain the relation

\begin{equation} \label{eq: volume_line_bundle_on_Y}
\vol(\mathscr{L})=\vol(\mathscr{L}|_{Y^{(m-1)}}) + \vol(L),    
\end{equation}

\noindent where $L$ is the linear series obtained by restricting the sections of $\mathscr{L}$ to $Y$. Moreover, either $m(L)=1$ or $\vol(L)=0$. 

If we assume that $\mathscr{L}$ is a nef invertible sheaf, we can refine \eqref{eq: volume_line_bundle_on_Y}. Under this additional assumption, we claim that $\vol(\mathscr{L}|_{Y}) = \vol(L)$. Indeed, let $d=\dim(Y^{(m)})$ and take cohomology of the short exact sequence

$$ 0 \rightarrow \mathcal{N} \otimes \mathscr{L}^{n} \rightarrow \mathscr{L}^n \rightarrow \mathcal{O}_{Y} \otimes \mathscr{L}^{n} \rightarrow 0$$

\noindent to obtain

$$ 0 \rightarrow \coh^{0}(Y^{(m)}, \mathcal{N} \otimes \mathscr{L}^{n}) \rightarrow \coh^{0}(Y^{(m)}, \mathscr{L}^n) \rightarrow \coh^{0}(Y,\left(\mathscr{L}|_{Y}\right)^{n}) \rightarrow V_n \rightarrow 0, $$

\noindent where $V_n$ is the image of the map $\coh^{0}(Y,\left(\mathscr{L}|_{Y}\right)^{n}) \rightarrow \coh^{1}(Y^{(m)}, \mathcal{N} \otimes \mathscr{L}^{n})$. In particular, we have the relation

\begin{equation} \label{eq}
h^{0}(Y,\left(\mathscr{L}|_{Y}\right)^{n})= h^{0}(Y^{(m)}, \mathscr{L}^{n}) - h^{0}(Y^{(m)}, \mathcal{N} \otimes \mathscr{L}^{n}) + \dim_{k}(V_{n}).
\end{equation}

\noindent Now, by \cite[Proposition 1.31]{Deb}, since $\mathscr{L}$ is nef, we have that 

$$ \lim \limits_{n \to \infty}\dfrac{\dim_{k}V_{n}}{n^d} \leq \lim \limits_{n \to \infty}\dfrac{h^{1}(Y^{(m)}, \mathcal{N} \otimes \mathscr{L}^{n})}{n^{d}}=0.  $$

\noindent Thus, dividing \eqref{eq} by $n^{d}/d!$ and letting $n$ go to infinity we see that

$$\vol(\mathscr{L}|_{Y}) = \vol(\mathscr{L}) - \vol_{\mathscr{L}}(\mathcal{N}),$$

\noindent and the quantity on the right is exactly $ \vol(L) $. Therefore, \eqref{eq: volume_line_bundle_on_Y} yields by induction the relation

$$ \vol(\mathscr{L}) = m \cdot \vol(\mathscr{L}|_{Y}). $$

\noindent It is known \cite[Section 2.2.C]{Laz01} that, when $\mathscr{L}$ is a nef invertible sheaf,

$$ \vol(\mathscr{L}) = ( \mathscr{L} )^d, $$

\noindent where $( \mathscr{L} )^d$ is the top self-intersection of $\mathscr{L}$. The equation above is then equivalent to

$$ ( \mathscr{L} )^d = m \cdot ( \mathscr{L}|_{Y})^d.$$

\end{remark}

\vspace{.2in}

Before giving the proof of \cref{prop: existence_of_limits_on_invertible_ideal_sheaf_on_normal_varieties}, for lack of a precise reference, we state here as a lemma the following standard fact.

\begin{lemma}\label{lem: principal_ideal_normal_ring}
Let $R$ be a Noetherian normal domain and let $I$ be a proper nonzero principal ideal. Then, there exist height one prime ideals $P_1, \dots, P_t$ and positive integers $n_1, \dots , n_t$ such that

$$I = P_1^{(n_1)} \cap \cdots \cap P_t^{(n_t)}.$$

\end{lemma}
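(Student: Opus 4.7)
The plan is to prove the statement via primary decomposition, using the fact that in a normal Noetherian domain the associated primes of a nonzero principal ideal are all of height one, and that localizing at a height one prime of such a ring gives a DVR.

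First I would invoke the Lasker--Noether theorem to write a minimal primary decomposition $I = Q_1 \cap \cdots \cap Q_t$, where each $Q_i$ is $P_i$-primary for the associated primes $P_1, \dots, P_t$ of $R/I$. Next, I would argue that every $P_i$ has height exactly one. Since $I=(f)$ with $f\neq 0$ is a proper ideal in a domain, the minimal primes over $(f)$ have height exactly one by Krull's principal ideal theorem (the inequality height~$\ge 1$ holds because $f \neq 0$, and height~$\le 1$ is Krull). The remaining task is to rule out embedded associated primes: this uses that a Noetherian normal domain satisfies Serre's condition $S_2$, which implies that the quotient $R/(f)$ by a nonzerodivisor has no embedded primes (equivalently, $(f)$ is unmixed of height one).

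Having established that each $P_i$ has height one, I would use normality to conclude that $R_{P_i}$ is a regular local ring of dimension one, that is, a DVR. In a DVR, every proper nonzero ideal is a power of the maximal ideal, so $Q_i R_{P_i} = (P_i R_{P_i})^{n_i}$ for a unique positive integer $n_i$. By the very definition of the symbolic power recalled just before \cref{prop: existence_of_limits_on_symbolic_power_of_integral_subschme_of_projective_scheme}, $P_i^{(n_i)}$ is the contraction of $(P_i R_{P_i})^{n_i}$ to $R$; on the other hand, since $Q_i$ is $P_i$-primary, $Q_i$ equals the contraction of $Q_i R_{P_i}$. Therefore $Q_i = P_i^{(n_i)}$, and substituting into the primary decomposition yields
$$I = P_1^{(n_1)} \cap \cdots \cap P_t^{(n_t)},$$
as desired.

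The main obstacle is the unmixedness step, i.e.\ verifying that there are no embedded associated primes. I would handle it by directly quoting the $S_2$ property of normal Noetherian domains (Serre's criterion, normal $\Leftrightarrow R_1 + S_2$) together with the fact that for a nonzerodivisor $f$, the $S_2$ condition on $R$ forces all associated primes of $R/(f)$ to have height exactly one. Once this is in hand, the rest of the argument is just the standard passage between $P$-primary ideals and powers of the maximal ideal in the DVR $R_P$.
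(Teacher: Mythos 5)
Your proof is correct and follows essentially the same route as the paper: take a primary decomposition $I=Q_1\cap\cdots\cap Q_t$, show every associated prime $P_i$ has height one, use normality to identify $R_{P_i}$ as a DVR so that $Q_iR_{P_i}=(P_iR_{P_i})^{n_i}$, and contract back to get $Q_i=P_i^{(n_i)}$. The only difference is bookkeeping: the paper simply cites \cite[Theorem 11.5]{Mat} for the fact that a nonzero principal ideal in a Noetherian normal domain is unmixed of height one, whereas you derive the same fact from Krull's principal ideal theorem together with Serre's criterion ($S_2$ rules out embedded primes), which is a valid and standard justification.
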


\begin{proof}

We have an irredundant primary decomposition

$$ I = Q_1 \cap \dots \cap Q_t, $$

\noindent where the $Q_i$ are primary ideals. By \cite[Theorem 11.5]{Mat}, the prime ideals $P_i=\sqrt{Q_i}$ have height one. Thus, for any $i$, we can localize at $P_i$ to get

\begin{equation} \label{eq: I_equal_Q}
I_{P_i} = (Q_i)_{P_i},
\end{equation}

\noindent thanks to \cite[Proposition 4.8.(i)]{Ati}. Since $R$ is normal and $P_i$ has height one, the ring $R_{P_i}$ is a DVR. Thus, $I_{P_{i}} = (P_i)^{n_i}_{P_i}$ for some positive integer $n_i$. We can then contract \eqref{eq: I_equal_Q} back to $R$ and apply \cite[Proposition 4.8.(ii)]{Ati} to get that

$$ P_{i}^{(n_i)} = Q_i.$$

\end{proof}

\begin{proof}[Proof of \cref{prop: existence_of_limits_on_invertible_ideal_sheaf_on_normal_varieties}]

Since $X$ is normal and $\mathcal{I}$ locally principal, there exist prime ideal sheaves $\mathcal{P}_{1}, \dots, \mathcal{P}_{t}$ such that 

$$ \mathcal{I} = \mathcal{P}_{1}^{(n_1)} \cap \dots \cap \mathcal{P}_{t}^{(n_t)}.$$

\noindent Moreover, the varieties associated to the ideal sheaves $\mathcal{P}_{i}$ have codimension one in $X$. This follows from \cref{lem: principal_ideal_normal_ring} applied to an affine open cover of $X$. Let $Y_{i}^{(n_{i})}$ be the subscheme associated to the ideal sheaf $ \mathcal{P}_{i}^{(n_i)} $. This decomposition gives an exact sequence

$$ 0 \rightarrow \mathcal{O}_{Y} \rightarrow \prod_{i=1}^{t} \mathcal{O}_{Y_{i}^{(n_{i})}} \rightarrow \mathcal{C} \rightarrow 0.$$

\noindent The sheaf $\mathcal{C}$ is supported on a closed subset of dimension smaller than the dimension of $Y$. Let $\mathscr{L}$ be an invertible sheaf on $Y$. The volume of $\prod_{i=1}^{t} \mathcal{O}_{Y_{i}^{(n_{i})}}$ with respect to $\mathscr{L}$ exists as a limit by \cref{prop: existence_of_limits_on_symbolic_power_of_integral_subschme_of_projective_scheme}. By \cref{lem: existence_limits_SES}, the volume of $\mathscr{L}$ exists as a limit and

$$\vol(\mathscr{L}) = \sum_{i=1}^{t} \vol\left( \mathscr{L}|_{{Y_i}^{(n_i)}} \right).$$

\end{proof}

We are now ready to give a proof of \cref{thm: existence_limits_codimension1}. We recall the notation for convenience. In the following, $X$ will represent a variety over an arbitrary field and $Y$ will denote a subscheme of $X$ of codimension one. Moreover, we assume that no codimension one component of $Y$ is contained in the nonnormal locus of $X$.

\begin{proof}[Proof of \cref{thm: existence_limits_codimension1}]

Let $V$ be the nonsingular locus of $X$ and let $Z$ be a codimension one component of $Y$. We claim that $V \cap Z \neq \emptyset$. To see this, first let $W$ be the normal locus of $X$. By assumption, $W \cap Z $ is a closed codimension one subset of $W$. Since $W$ is a normal open subscheme of $X$, its singular locus has codimension at least two and hence $W \cap Z $ must contain a nonsingular point of $W$. But this point is also nonsingular in $X$ since $W$ is open and hence $V \cap Z \neq \emptyset$ as we claimed.

Let $\pi \colon \widetilde{X} \rightarrow X$ be the normalization of the blow-up of the ideal sheaf $\mathcal{I}_{Y}$ of $Y$ in $X$. By \cite[Lemma 6.11]{Cut_Book} (which is valid over arbitrary fields), the morphism $\pi$ is an isomorphism over the open set

$$ U\coloneqq \{ p \in X \colon p \text{ is normal and }\mathcal{I}_{Y,p} \text{ is a principal ideal} \}.$$

\noindent Notice that every codimension one component of $Y$ intersects $U$. Indeed, every component of $Y$ contains a nonsingular point of $X$, nonsingular points are normal, and the ideal sheaf $\mathcal{I}$ is principal at nonsingular points since $Y$ is of codimension one.

Let $\widetilde{Y}$ be the total transform of $Y$ in $\widetilde{X}$, that is, $\widetilde{Y}$ is the closed subscheme of $\widetilde{X}$ given by the invertible ideal sheaf $\mathcal{I}_{Y}\mathcal{O}_{\widetilde{X}}$. The morphism $\pi$ restricts to a morphism $\widetilde{Y} \rightarrow Y$, which we also call $\pi$. We have that $\pi$ is an isomorphism over $Y \cap U$. Moreover, $\pi_{*}\mathcal{O}_{\widetilde{Y}}$ is a coherent $\mathcal{O}_{Y}$-module since both $\widetilde{X}$ and $X$ are projective varieties (and hence $\pi$ is a projective morphism). We have an exact sequence

$$ 0 \rightarrow \mathcal{K} \rightarrow \mathcal{O}_{Y} \rightarrow \pi_{*}\mathcal{O}_{\widetilde{Y}} \rightarrow \mathcal{C} \rightarrow 0, $$

\noindent where the sheaves $\mathcal{K}$ and $\mathcal{C}$ are supported on closed subsets of dimension smaller than the dimension of $Y$. Let $\mathscr{L}$ be an invertible sheaf on $Y$. By the projection formula, for any $n \in \mathbb{N}$, we have that $\pi_{*}\mathcal{O}_{\widetilde{Y}} \otimes \mathscr{L}^{n} \cong \pi_{*}\left(\pi^{*}\mathscr{L} \right)^{n}$
and hence $\vol_{\mathscr{L}}(\pi_{*}\mathcal{O}_{\widetilde{Y}})$ exists as a limit if and only if $\vol(\pi^{*}\mathscr{L})$ exists as a limit. Now,  notice that $\widetilde{Y}$ is a closed subscheme of the normal projective variety $\widetilde{X}$ given by an invertible ideal sheaf. By \cref{prop: existence_of_limits_on_invertible_ideal_sheaf_on_normal_varieties}, the volume of $\pi^{*}\mathscr{L}$ exists as a limit. By \cref{cor: existence_limits_isomorphism_away_from_closed_sets} applied to the exact sequence above, the volume of $\mathscr{L}$ exists as a limit. Moreover, 

$$ \vol(\mathscr{L}) = \vol_{\mathscr{L}}(\pi_{*}\mathcal{O}_{\widetilde{Y}}) = \vol(\pi^{*}\mathscr{L}) .$$

\end{proof}

\section{Projective Schemes with $\mathcal{N}^2=0$}

\subsection{Volumes of Coherent Sheaves on Reduced Schemes}

The following proposition shows that the volume of a coherent $\mathcal{O}_{X}$-module with respect to an invertible sheaf always exists on reduced schemes. Together with \cref{prop: all_depends_on _volume_of_nilradical}, this result is the main ingredient in the proof of \cref{thm: nilradical_squared_equals_zero}  

\begin{proposition} \label{prop: invertible_times_coherent}

Let $X$ be a reduced projective scheme over an arbitrary field. Let $X_{1}, \dots, X_{r}$ be the irreducible components of $X$ of maximal dimension. Let $\mathscr{L}$ be an invertible sheaf on $X$ and let $\mathcal{F}$ be a coherent $\mathcal{O}_{X}$-module. Then, the volume of $\mathcal{F}$ with respect to $\mathscr{L}$ exists as a limit and we have the formula

$$ \vol_{\mathscr{L}}(\mathcal{F}) = \sum \limits_{j=1}^{r} \rank(\mathcal{F}|_{X_j})\vol(\mathscr{L}|_{X_j}), $$

\noindent where the volumes $\vol(\mathscr{L}|_{X_j})$ exist as limits since the $X_j$ are projective varieties.

\end{proposition}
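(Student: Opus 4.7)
The plan is to reduce the computation to each irreducible component of $X$ by means of the standard exact sequence coming from the decomposition of $X$ into its components, and then to invoke \cref{lem: coherent_sheaves on irreducible} on each top-dimensional one. Let $X_{1},\dots,X_{r},X_{r+1},\dots,X_{s}$ be all the irreducible components of $X$, where $X_{r+1},\dots,X_{s}$ are those of dimension strictly less than $d$. Since $X$ is reduced, the product of the restriction maps fits into a short exact sequence
$$0 \longrightarrow \mathcal{O}_{X} \longrightarrow \bigoplus_{j=1}^{s} \mathcal{O}_{X_{j}} \longrightarrow \mathcal{C} \longrightarrow 0,$$
with $\mathcal{C}$ supported on the union of the pairwise intersections $X_{i}\cap X_{j}$ for $i\ne j$. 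Because no component contains another, each such intersection is a proper closed subset of $X_{i}$, so $\dim\Supp(\mathcal{C})<d$.

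Next I will tensor this sequence with $\mathcal{F}$ and use the long exact sequence of $\mathrm{Tor}$ to obtain
$$0 \longrightarrow \mathcal{K} \longrightarrow \mathcal{F} \longrightarrow \bigoplus_{j=1}^{s} \mathcal{F}\otimes\mathcal{O}_{X_{j}} \longrightarrow \mathcal{F}\otimes\mathcal{C} \longrightarrow 0,$$
where $\mathcal{K}$ is a quotient of $\mathrm{Tor}_{1}^{\mathcal{O}_{X}}(\mathcal{F},\mathcal{C})$. Since $\mathrm{Tor}$-sheaves are supported on the intersection of the supports of their factors, both $\mathcal{K}$ and $\mathcal{F}\otimes\mathcal{C}$ have support of dimension less than $d$. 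By \cref{cor: existence_limits_isomorphism_away_from_closed_sets}, $\vol_{\mathscr{L}}(\mathcal{F})$ exists as a limit if and only if $\vol_{\mathscr{L}}\bigl(\bigoplus_{j} \mathcal{F}\otimes\mathcal{O}_{X_{j}}\bigr) = \sum_{j} \vol_{\mathscr{L}}(\mathcal{F}\otimes\mathcal{O}_{X_{j}})$ does, and the two values coincide.

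It then remains to evaluate each summand. For $j>r$, the sheaf $\mathcal{F}\otimes\mathcal{O}_{X_{j}}$ is supported on $X_{j}$, a closed subset of dimension strictly less than $d$, so its volume with respect to $\mathscr{L}$ is a limit equal to $0$. For $j\le r$, the component $X_{j}$ is a $d$-dimensional projective variety, so $\vol(\mathscr{L}|_{X_{j}})$ exists as a limit by \cref{thm: existence_volume_linear_series_on_varieties} and the remark following it. Because $X_{j}$ is integral, the stalk of $\mathcal{F}|_{X_{j}}$ at the generic point is automatically free over the residue field, so $\mathcal{F}|_{X_{j}}$ is generically free of rank $\rank(\mathcal{F}|_{X_{j}})$. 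Applying \cref{lem: coherent_sheaves on irreducible} on $X_{j}$, together with the projection-formula identification $\coh^{0}(X,\mathcal{F}\otimes\mathcal{O}_{X_{j}}\otimes\mathscr{L}^{n}) = \coh^{0}(X_{j},\mathcal{F}|_{X_{j}}\otimes\mathscr{L}|_{X_{j}}^{n})$, yields $\vol_{\mathscr{L}}(\mathcal{F}\otimes\mathcal{O}_{X_{j}}) = \rank(\mathcal{F}|_{X_{j}})\cdot\vol(\mathscr{L}|_{X_{j}})$. Summing over $j\le r$ produces the formula. The only genuine technical point is the support statement for the leftmost kernel $\mathcal{K}$, which is handled by the standard $\mathrm{Tor}$-support remark; everything else is bookkeeping with the results of \cref{sec: Preliminaries}.
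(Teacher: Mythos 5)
Your proof is correct and follows essentially the same route as the paper: decompose via $0\to\mathcal{O}_{X}\to\bigoplus_{j}\mathcal{O}_{X_{j}}\to\mathcal{C}\to 0$, tensor with $\mathcal{F}$, discard the low-dimensional kernel and cokernel using \cref{cor: existence_limits_isomorphism_away_from_closed_sets}, and apply \cref{lem: coherent_sheaves on irreducible} on each top-dimensional component. The only (harmless) divergence is in bounding $\Supp(\mathcal{K})$: the paper checks directly that $\mathcal{K}_{\xi_{j}}=0$ at each generic point, using $\mathcal{O}_{X_{j},\xi_{j}}=\mathcal{O}_{X,\xi_{j}}$ from reducedness, whereas you realize $\mathcal{K}$ as a quotient of $\mathrm{Tor}_{1}^{\mathcal{O}_{X}}(\mathcal{F},\mathcal{C})$ and use the support of $\mathrm{Tor}$; both are valid.
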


This statement can be deduced with a little work from \cref{lem: coherent_sheaves on irreducible}.

\begin{proof}[Proof of \cref{prop: invertible_times_coherent}]

Let us assume first that $X$ is irreducible and hence a variety. By the comment after \cref{thm: existence_volume_linear_series_on_varieties}, we have that $\vol(\mathscr{L})$ exists as a limit. Let $\xi$ be the generic point of $X$. If $\mathcal{F}_\xi=0$, then $\mathcal{F}$ is supported on a closed set of dimension smaller than $\dim(X)$ and then the limit in question equals zero. Thus, given that $X$ is a variety, we can assume that $\mathcal{F}$ is generically free. By \cref{lem: coherent_sheaves on irreducible},

$$\vol_\mathscr{L}(\mathcal{F})=\rank(\mathcal{F})\vol(\mathscr{L})$$

\noindent exists as a limit.

If $X$ is not irreducible, let $\xi_1, \dots, \xi_t$ be the generic points of the irreducible components $X_1, \dots, X_t$ of $X$ and label them in such a way that, for some $r \leq t$, $X_1, \dots, X_r $ are all the components of maximal dimension. Since $X$ is reduced, we have a short exact sequence

$$ 0 \rightarrow \mathcal{O}_{X} \rightarrow \prod \limits_{j=1}^t \mathcal{O}_{X_j} \rightarrow \mathcal{C} \rightarrow 0,$$

\noindent where $ \mathcal{C} $ is supported on a closed subset of dimension less than the dimension of $X$. This exact sequence can be tensored with $\mathcal{F}$ to obtain

\begin{equation} \label{ses: kernel_non_flat}
    0 \rightarrow \mathcal{K} \rightarrow \mathcal{F}  \rightarrow \prod \limits_{j=1}^t  \mathcal{O}_{X_j} \otimes \mathcal{F} \rightarrow \mathcal{C} \otimes \mathcal{F} \rightarrow 0.
\end{equation}

\noindent We claim that $\mathcal{K}$ is supported on a closed set of dimension smaller than the dimension of $X$. Indeed, for any $j$, since $X$ is reduced, we have that $\mathcal{O}_{X_j,\xi_j} = \mathcal{O}_{X,\xi_j}$. This shows that the map $\mathcal{F} \rightarrow \prod_j  \mathcal{O}_{X_j} \otimes \mathcal{F}$ is injective at $\xi_j$. Hence, $\mathcal{K}_{\xi_j} = 0$ for all $j \in \{ 1, \dots, t \}$ and this implies that $\dim(\Supp(\mathcal{K})) < \dim(X)$. Since we also have that $\dim(\Supp(C \otimes \mathcal{F})) < \dim(X)$, we can apply \cref{cor: existence_limits_isomorphism_away_from_closed_sets} to \eqref{ses: kernel_non_flat}  and the invertible sheaf $\mathscr{L}$ to obtain that

\begin{equation*}
\begin{split}
    \vol_{\mathscr{L}}(\mathcal{F}) &= \sum \limits_{j=1}^{t} \lim \limits_{n \to \infty} \dfrac{h^{0} \left( X, \mathcal{O}_{X_j} \otimes \mathcal{F} \otimes \mathscr{L}^{n} \right)}{n^d/d!} \\
&=\sum \limits_{j=1}^{t} \lim \limits_{n \to \infty} \dfrac{h^{0} \left( X_j, \mathcal{F}|_{X_{j}} \otimes \left( \mathscr{L}|_{X_j} \right)^{n} \right)}{n^d/d!},
\end{split}
\end{equation*}

\noindent where the limits in the second equality exist by the case treated above since the $X_j$ are projective varieties. Now, the limit

$$\lim \limits_{n \to \infty} \dfrac{h^{0} \left( X_j, \mathcal{F}|_{X_{j}} \otimes \left( \mathscr{L}|_{X_j} \right)^{n} \right)}{n^d/d!}$$

\noindent is equal to zero if $\dim(X_j) < d$, by \cite[Proposition 1.31]{Deb}, and equal to $\vol_{\mathscr{L}|_{X_j}}(\mathcal{F}|_{X_j})$ whenever $\dim(X_j) = d$. Thus, we can consider only the components of maximal dimension and obtain the formula

$$  \vol_{\mathscr{L}}(\mathcal{F}) = \sum \limits_{j=1}^{r} \vol_{\mathscr{L}|_{X_j}}(\mathcal{F}|_{X_j}) = \sum \limits_{j=1}^{r} \rank(\mathcal{F}|_{X_j})\vol(\mathscr{L}|_{X_j}). $$

\end{proof}

\subsection{Proof of \cref{thm: nilradical_squared_equals_zero}}

We recall for convenience the notation of \cref{thm: nilradical_squared_equals_zero}. In what follows, $X$ will denote a $d$-dimensional projective scheme over a field $k$. The nilradical of $X$ will be represented by $\mathcal{N}=\mathcal{N}(X)$ and $\mathscr{L}$ will denote an invertible sheaf on $X$. We suppose that $\mathcal{N}^2=0$.

\begin{proof}[Proof of \cref{thm: nilradical_squared_equals_zero}]

Let us consider first the case where $X$ is irreducible. Let $X_{red}$ be the closed subscheme of $X$ defined by the ideal sheaf $\mathcal{N}$. Notice that $X_{red}$ is a variety. In order to apply \cref{prop: all_depends_on _volume_of_nilradical}, we have to show that $\vol_{\mathscr{L}}(\mathcal{N} )$ exists as a limit. Since $\mathcal{N}^2=0$, we have that $\mathcal{N} \subset Ann_{\mathcal{O}_X}(\mathcal{N})$ and, in consequence, $\mathcal{N}$ has the structure of an $\mathcal{O}_{X_{red}}$-module. It follows that

$$ \coh^{0}(X, \mathcal{N} \otimes \mathscr{L}^{n} ) = \coh^{0}(X_{red}, \mathcal{N}|_{X_{red}} \otimes \left(\mathscr{L}|_{X_{red}}\right)^{n}). $$

\noindent Thus, the limit

$$\vol_{\mathscr{L}}(\mathcal{N})=\lim \limits_{n \to \infty} \dfrac{h^{0}(X_{red},  \mathcal{N|}_{X_{red}} \otimes \left(\mathscr{L}|_{X_{red}}\right)^{n})}{n^d/d!} $$

\noindent exists by \cref{prop: invertible_times_coherent} with $\mathcal{F}=\mathcal{N}|_{X_{red}}$. By \cref{prop: all_depends_on _volume_of_nilradical}, $\vol(\mathscr{L})$ exists as a limit.

If $X$ is not irreducible, let $0 = \mathcal{Q}_1 \cap \cdots \cap \mathcal{Q}_{t} $ be an irredundant primary decomposition of the zero ideal sheaf with respect to a given projective embedding (this can be obtained by sheafifying a primary decomposition of the zero ideal in the coordinate ring of $X$ with respect to the given embedding). Let $Y_i$ be the irreducible closed subscheme of $X$ determined by the ideal sheaf $\mathcal{Q}_i$. We have a short exact sequence

$$0 \rightarrow \mathcal{O}_{X} \rightarrow \prod_{i=1}^{t} \mathcal{O}_{Y_{i}} \rightarrow \mathcal{C} \rightarrow 0,$$

\noindent where the cokernel $\mathcal{C}$ is supported on a closed set of smaller dimension than $X$. By \cref{lem: existence_limits_SES}, in order to show that $\vol(\mathscr{L})$ exists as a limit, it is enough to show that the limits

$$ \lim \limits_{n \to \infty} \dfrac{h^0 \left(X, \mathcal{O}_{Y_i} \otimes \mathscr{L}^n \right)}{n^d}=\lim \limits_{n \to \infty} \dfrac{h^0 \left(Y_i, \left( \mathscr{L}|_{Y_i}\right)^n \right)}{n^d} $$

\noindent exist for all $i$. By \cite[Proposition 1.31]{Deb}, these limits will be equal to zero whenever $Y_i$ is not a subscheme of maximal dimension. Thus, let us assume that $Y_i$ has maximal dimension $d$. Notice that $Y_i$ is irreducible. By the case considered at the beginning of the proof, it is enough to show that the nilradical $\mathcal{N}(Y_i)$ of $Y_i$ satisfies $\mathcal{N}(Y_i)^{2} = 0$. We prove this by working locally.

Let $U=Spec(R)$ be an affine open subset of $X$. If $U \cap Y_i =\emptyset$, then $\mathcal{N}(Y_i)^{2}|_{U} = 0$ trivially. We can hence assume that $U \cap Y_i \neq \emptyset$. Moreover, to simplify the notation, suppose that $U \cap Y_j \neq \emptyset$ for all other indices $j$ as well. For otherwise, the ideal sheaves $\mathcal{Q}_j|_{U} = \mathcal{O}_{U}$ and we can disregard them in our considerations. We have $\mathcal{Q}_j = \widetilde{Q_j}$ for ideals $Q_j$ of $R$. Let $P_j = \sqrt{Q_j}$. Then the ideals $Q_j$ are $P_j$-primary. We have that $\mathcal{N}(X)|_U=\widetilde{N}$, where $N$ is the nilradical of $R$ and by assumption $N^2=0$. Also notice that $\mathcal{O}_{Y_i}|_{U} = \widetilde{R/Q_i}$ and $\mathcal{N}(Y_i)|_{U} = \widetilde{P_i/Q_i}$. Thus, it is enough to show that $P_i^2 \subset Q_i$ and we now proceed to do so.

In $R$ we have the following relation:

$$ Q_1 \cap \cdots \cap Q_t = 0 = N^2 = (P_1 \cap \cdots \cap P_t       )^2. $$

\noindent Since $Y_i$ has maximal dimension, $P_i$ is a minimal prime. We can then localize the equation above at $P_i$ and, with the help of \cite[Proposition 4.8.(i)]{Ati}, obtain 

$$ (Q_i)_{P_i} = (P_i)_{P_i}^2 = (P_i^2)_{P_i}.$$

\noindent By \cite[Proposition 4.8.(ii)]{Ati}, $Q_i = (P_i^2)_{P_i} \cap R \supset P_i^2$. Therefore, we get that $ \mathcal{N}(Y_i)^2|_{U}= \widetilde{\left(P_i/Q_i\right)^2} = \widetilde{(P_i^2+Q_i)/Q_i}=0$. Since $U$ was arbitrary, $\mathcal{N}(Y_i)^2=0$ and the proof is complete.

\end{proof}

\begin{remark}

As in the previous proposition, let $X$ be a projective scheme over a field with nilradical $\mathcal{N}$. Suppose that $\mathcal{N}^{2}=0$. Assume for simplicity that $X$ is irreducible. Let $\mathscr{L}$ be in invertible sheaf on $X$. \cref{thm: nilradical_squared_equals_zero} shows that the volume of $\mathscr{L}$ exists as a limit. We now give a formula for this volume.

The nilradical $\mathcal{N}$ is an $\mathcal{O}_{X_{red}}$-module as explained in the proof of \cref{thm: nilradical_squared_equals_zero}. Then 

\begin{equation}\label{eq: remark1}
\vol_{\mathscr{L}}(\mathcal{N}) = \vol_{\mathscr{L}|_{X_{red}}}(\mathcal{N}|_{X_{red}})= \rank(\mathcal{N}|_{X_{red}})\vol(\mathscr{L}|_{X_{red}}),
\end{equation}

\noindent where the second equality follows form \cref{prop: invertible_times_coherent}. By \cref{prop: all_depends_on _volume_of_nilradical}, we conclude that

$$ \vol(\mathscr{L}) = \vol(L) + \rank(\mathcal{N}|_{X_{red}})\vol(\mathscr{L}|_{X_{red}}), $$

\noindent where $L$ is the linear series obtained by restricting the sections of $\mathscr{L}$ to $X_{red}$. 

\noindent If $\mathscr{L}$ is a nef invertible sheaf, as shown in the remark after \cref{prop: existence_of_limits_on_symbolic_power_of_integral_subschme_of_projective_scheme}, $\vol(L) = \vol(\mathscr{L}|_{X_{red}})$ and we get the relation

$$ \vol(\mathscr{L}) = \left(\rank(\mathcal{N}|_{X_{red}}) + 1 \right)\vol(\mathscr{L}|_{X_{red}}).$$

\end{remark}

\end{document}